\documentclass[12pt,reqno]{amsart}
\usepackage{fullpage}
\usepackage[top=1.4in, bottom=1.38in, left=1.38in, right=1.38in]{geometry}

\usepackage{graphicx}
\usepackage{amsmath,amsopn,amssymb,amsfonts,amsthm}
\usepackage{mathtools}
\usepackage{color}
\usepackage{ulem}

\usepackage{enumitem}
\usepackage[framemethod=TikZ]{mdframed}
\usepackage{bbm}
\usepackage{mathrsfs}
\usepackage{booktabs}
\usepackage{caption}
\usepackage{bm}
\usepackage{tensor}
\usepackage{cleveref}
\usepackage{cancel}

\renewcommand{\H}{\mathbb{H}}

\newcommand{\SL}{\mathrm{SL}}

\newcommand{\N}{\mathbb N}
\newcommand{\C}{\mathbb C}



\theoremstyle{plain}
\newtheorem{thm}{Theorem}[section]

\newtheorem{lem}[thm]{Lemma}

\theoremstyle{definition}

\newtheorem{defn}[thm]{Definition}

\numberwithin{equation}{section}

\newcommand{\pa}[2]{\left(\frac{#1}{#2}\right)}


\def\k{\kappa}

\def\n{\nu}

\def\k{\kappa}

\def\n{\nu}

\newcommand{\re}{{\rm Re}}
\newcommand{\im}{{\rm Im}}

\newcommand{\R}{\mathbb R}
\newcommand{\Z}{\mathbb Z}

\newcommand{\Log}{\operatorname{Log}}


\renewcommand{\binom}[2]{\left(\begin{smallmatrix}#1\\\\#2\end{smallmatrix}\right)}

\setlist[itemize]{noitemsep, topsep=0pt}

\allowdisplaybreaks

\makeatletter
\newcommand{\vast}{\bBigg@{3}}
\newcommand{\Vast}{\bBigg@{5}}
\makeatother

\DeclareMathOperator{\srp}{srp}

\title[Modularity of moments of reciprocal sums for distinct partitions]{Modularity of moments of reciprocal sums for partitions into distinct parts}

\author[K. Bringmann]{Kathrin Bringmann}
\address{Department of Mathematics and Computer Science\\Division of Mathematics\\University of Cologne\\ Weyertal 86-90 \\ 50931 Cologne \\Germany}
\email{kbringma@math.uni-koeln.de}

\author[B. Kim]{Byungchan Kim}
\address{School of Natural Sciences, Seoul National University of Science and Technology, 232 Gongneung-ro, Nowon-gu, Seoul, 01811, Republic of Korea}
\email{bkim4@seoultech.ac.kr}

\author[E. Kim]{Eunmi Kim}
\address{Institute of Mathematical Sciences, Ewha Womans University,  52 Ewhayeodae-gil, Seodaemun-gu, Seoul 03760, Republic of Korea}
\email{ekim67@ewha.ac.kr; eunmi.kim67@gmail.com}

\makeatletter
\@namedef{subjclassname@2020}{\textup{2020} Mathematics Subject Classification}
\makeatother

\subjclass[2020]{11F20, 11P82, 11P83}
\keywords{Eichler integrals, Eisenstein Series, integer partitions, raising operator, Maass forms, sesquiharmonic Maass forms, sums of reciprocal parts}

\begin{document}

\begin{abstract}
In this paper, we determine modularity properties of the generating function of $s_k(n)$ which sums $k$-th power of reciprocals of parts throughout all of the partitions of $n$ into distinct parts. In particular, we show that the generating function for $s_k (n)$ is related to Maass Eisenstein series and sesquiharmonic Maass forms.
\end{abstract}

\maketitle

\section{Introduction and statements of results}

A {\it partition} of $n\in\N_0$ is a non-increasing sequence of positive integers $\lambda_1\ge\lambda_2\ge\dots\ge\lambda_\ell$ such that the {\it parts} $\lambda_j$ sum up to $n$. We denote by $p(n)$ the number of partitions of $n$. Its generating function is, by Euler,
\begin{equation*}
	P(q):=\sum_{n\ge0} p(n)q^n = \frac{1}{(q;q)_\infty},
\end{equation*}
where, for $a\in\C$, $n\in\N_0\cup\{\infty\}$, we let $(a;q)_n:=\prod_{j=0}^{n-1}(1-aq^j)$. Note that $P(q)$ is (essentially) a modular form. This modularity has many applications including Hardy--Ramanujan's asymptotic formula \cite{HR}, Rademacher's exact formula \cite{Rad}, and Ramanujan's partition congruences \cite{Ra}.  Graham \cite{Graham} studied partitions with conditions on the reciprocal of parts.  We let $\srp (\lambda):= \sum_{j=1}^{\ell(\lambda)} \frac{1}{\lambda_j}$, where $\ell(\lambda)$ is the number of parts of the partition $\lambda$ and we define $\mathcal{D}_n$ to be the set of partitions of $n$ into distinct parts. Graham \cite[Theorem 1]{Graham} proved that there exists a partition $\lambda \in \mathcal{D}_n$ with $\srp(\lambda)=1$ if $n \ge 78$. To understand how $\srp(\lambda)$ is distributed along $\mathcal{D}_n$, the last two authors \cite{KK} introduced the counting functions\footnote{In \cite{KK}, $s_1(n)$ and $s_2(n)$ were denoted by $s(n)$ and $ss(n)$.} $s_1(n)$ and $s_2(n)$ as
\[
	s_1(n) := \sum_{\lambda \in \mathcal{D}_n}  \srp (\lambda)
	\quad\text{and}\quad
	s_2(n) := \sum_{\lambda \in \mathcal{D}_n} \srp^2(\lambda),
\]
which are the first and the second moments of the reciprocal sums of partitions.

In \cite[Theorems 1.1 and 1.3]{BKK}, the authors of this paper obtained asymptotic formulas for $s_1(n)$ and $s_2(n)$ with polynomial error by analyzing the behavior of their generating functions near roots of unity using the Circle Method. Along the way, we found that these functions introduce extra terms if we apply modular transformations. 

More generally, following \cite{BKK}, let $s_k(n)$ be the {\it$k$-th moment of the reciprocal sums of partitions} for $k\in\N_0$, i.e.,
\[
	s_k(n) := \sum_{\lambda \in \mathcal{D}_n} \srp^k (\lambda) = \sum_{\lambda \in \mathcal{D}_n} \left( \sum_{j=1}^{\ell(\lambda)} \frac{1}{\lambda_j}\right)^k.
\]
Then the generating function for $s_k(n)$ is
\[
	\sum_{n \geq 1} s_k (n) q^n
	= \left[ \left(\zeta \frac{d}{d\zeta} \right)^k \prod_{r\ge1} \left( 1+ \zeta^\frac1rq^r \right) \right]_{\zeta=1}.
\]
Define
\begin{equation*}
	g_k (q) := \left[ \left(\zeta \frac{d}{d\zeta} \right)^k \sum_{r\ge1} \Log \left(1+\zeta^{\frac1r} q^r\right) \right]_{\zeta=1},
\end{equation*}
where $\Log$ denotes the principal branch of the logarithm. Note that
\[
	\frac{1}{(-q;q)_{\infty}} \sum_{n \geq 1} s_k (n) q^n \in \mathbb{Z}\left[g_1(q), g_2(q), \ldots, g_k (q)\right].
\]
For instance, we have the following generating functions for the first and the second moments, as in Subsection 4.1 of\footnote{The $q$-series $g_1 (q)$ could be interpreted as a distinct partition analogue of $q$-bracket as it is $\frac{ \sum_{\lambda \in \mathcal{D} } {\rm srp} (\lambda) q^{|\lambda|} }{ \sum_{\lambda \in \mathcal{D}} q^{| \lambda |} }$, which is similar to the spin $q$-bracket defined in \cite[Subsection 2.1]{IS}.} \cite{KK}:
\begin{align}
	\frac1{ (-q;q)_\infty } \sum_{n\ge1} s_1(n) q^n &= g_1(q) = \sum_{n\ge1}\frac{q^n}{n\left(1+q^n\right)},\label{g1}\\
	\frac1{ (-q;q)_\infty } \sum_{n\ge1} s_2(n) q^n &= g_1^2(q) + g_2(q) = \left( \sum_{n\ge1}\frac{q^n}{n\left(1+q^n\right)}\right)^2+ \sum_{n\ge1}\frac{q^n}{n^2\left(1+q^n\right)^2}.\nonumber
\end{align}
Throughout the paper, we write $q=e^{2\pi i \tau}$ with $\tau = u + iv \in \mathbb{H}$, the complex upper half-plane.
Note that $g_1(q)$ and $g_2(q)$ are reminiscent of the {\it weight $2k$ Eisenstein series} 
\[
	E_{2k}(\tau) := 1-\frac{4k}{B_{2k}}\sum_{n\ge1} \sigma_{2k-1}(n) q^n = 1 - \frac{4k}{B_{2k}} \sum_{n \geq 1} \frac{n^{2k-1} q^n}{1-q^n},
\]
where $B_\ell$ denotes the {\it $\ell$-th Bernoulli number} and $\sigma_j (n):= \sum_{{d\mid n}}{d^j}$ in the {\it $j$-th divisor sum}. Based on this observation and the nice asymptotic behaviors and the modular-like transformations of $g_1(q)$ and $g_2(q)$ \cite{BKK}, the authors asked whether $g_k(q)$ can be ``completed'' to a modular object. In this paper, we show that this is indeed the case.

First, we consider $g_1(q)$ defined in \eqref{g1} as it behaves differently than $g_k(q)$ for $k\ge2$. Define
\begin{equation*}
	\widehat g_1(\tau) := 2\log(2) - \gamma + \frac{6\zeta'(2)}{\pi^2} - \frac\pi2 v + \frac{\log(v)}2 + g_1(q) + g_1(\overline q),
\end{equation*}
where $\zeta (s)$ denotes the usual Riemann zeta function. Note that $g_1(q)$ is basically the ``holomorphic part'' of $\widehat g_1(\tau)$. Our first result shows that $\widehat g_1$ is a so-called sesquiharmonic Maass form. Recall that harmonic Maass forms of weight $k$ are annihilated by $\Delta_k = -\xi_{2-k}\circ\xi_k$, whereas sesquiharmonic Maass forms are annihilated by $-\xi_k\circ\xi_{2-k}\circ\xi_k$ (see \Cref{S:Prelim} for details). Here the {\it shadow operator} is defined as 
\begin{equation*}
	\xi_k := 2iv^k \overline{\frac\partial{\partial\overline\tau}}.
\end{equation*}
We let
\begin{equation*}
	\widehat E_2(\tau) := 1 - 24\sum_{n\ge1} \sigma_1(n) q^n - \frac3{\pi v}
\end{equation*}
be the {\it weight two non-holomorphic Eisenstein series}.

\begin{thm}\label{thm:g1}
	The function $\widehat g_1$ is a sesquiharmonic Maass form of weight zero on $\Gamma_0(2)$. Moreover, we have
	\begin{equation*}
		\xi_0\left(\widehat g_1(\tau)\right) = \frac\pi6\left(\widehat E_2(\tau) - 4\widehat E_2(2\tau)\right).
	\end{equation*}
\end{thm}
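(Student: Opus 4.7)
The plan is to derive a closed-form expression for $g_1(q)$ in terms of the Dedekind eta function $\eta$, after which modularity and the $\xi_0$-identity become direct consequences of standard transformation formulas. Starting from the identity
\begin{equation*}
\frac{q^n}{1+q^n}=\frac{q^n}{1-q^n}-\frac{2q^{2n}}{1-q^{2n}},
\end{equation*}
expanding each term as a geometric series, interchanging the order of summation, and using $\sum_{k\ge 1}q^{kN}/k=-\log(1-q^N)$ yields
\begin{equation*}
g_1(q)=-\log(q;q)_\infty+2\log(q^2;q^2)_\infty=2\log\eta(2\tau)-\log\eta(\tau)-\frac{\pi i\tau}{4},
\end{equation*}
where the last equality uses $(q^a;q^a)_\infty=q^{-a/24}\eta(a\tau)$.

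Adding $g_1(q)+g_1(\overline q)$, the imaginary contribution $-\frac{\pi i(\tau-\overline\tau)}{4}=\frac{\pi v}{2}$ exactly cancels the $-\frac{\pi v}{2}$ term in $\widehat g_1$, giving
\begin{equation*}
\widehat g_1(\tau)=C+\tfrac{1}{2}\log v+2\log|\eta(2\tau)|^2-\log|\eta(\tau)|^2,\qquad C:=2\log(2)-\gamma+\tfrac{6\zeta'(2)}{\pi^2}.
\end{equation*}
Modular invariance on $\Gamma_0(2)$ then follows from $|\eta(\gamma\tau)|^2=|c\tau+d|\cdot|\eta(\tau)|^2$ for $\gamma\in\mathrm{SL}_2(\IZ)$: for $\gamma\in\Gamma_0(2)$ there is an explicit $\gamma'\in\mathrm{SL}_2(\IZ)$ with $2(\gamma\tau)=\gamma'(2\tau)$, so $|\eta(2\gamma\tau)|^2=|c\tau+d|\cdot|\eta(2\tau)|^2$ as well, and the net factor of $\log|c\tau+d|$ from the two eta contributions cancels against the shift $\log v\mapsto\log v-2\log|c\tau+d|$.

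For the $\xi_0$-identity, I use $\partial_\tau\log\eta(\tau)=\frac{\pi i}{12}E_2(\tau)$, which by the chain rule gives $\partial_{\overline\tau}\log|\eta(n\tau)|^2=-\frac{n\pi i}{12}\overline{E_2(n\tau)}$ for $n\in\{1,2\}$. Combining with $\partial_{\overline\tau}(\tfrac{1}{2}\log v)=\tfrac{i}{4v}$ and applying $\xi_0=2i\,\overline{\partial_{\overline\tau}}$ produces
\begin{equation*}
\xi_0(\widehat g_1)=\frac{1}{2v}+\frac{\pi}{6}E_2(\tau)-\frac{2\pi}{3}E_2(2\tau),
\end{equation*}
and substituting $\widehat E_2(n\tau)=E_2(n\tau)-\frac{3}{\pi n v}$ rewrites this as $\frac{\pi}{6}\bigl(\widehat E_2(\tau)-4\widehat E_2(2\tau)\bigr)$, as claimed.

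Sesquiharmonicity is then automatic: $\xi_0(\widehat g_1)$ is a linear combination of the completed weight-$2$ Eisenstein series $\widehat E_2(\tau)$ and $\widehat E_2(2\tau)$, and a direct calculation yields $\xi_2(\widehat E_2(n\tau))=\frac{3}{\pi n}\in\IC$; hence $\xi_2\xi_0(\widehat g_1)$ is a constant and therefore holomorphic, so $\xi_0\xi_2\xi_0(\widehat g_1)=0$. The principal technical care lies in Step~1 (tracking logarithm branches and extracting the precise $-\frac{\pi i\tau}{4}$ correction from the $q^{1/24}$ prefactor of $\eta$) and in the explicit $\gamma\mapsto\gamma'$ conjugation needed to handle $\eta(2\tau)$ under $\Gamma_0(2)$. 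The constants assembled in $C$ play no role in either modular invariance or the $\xi_0$-identity; they are presumably fixed by an independent normalization (for instance, matching a specific constant term in the Fourier expansion at a cusp) rather than by the statement of the theorem.
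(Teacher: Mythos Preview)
Your proof is correct and takes a genuinely different route from the paper's. The paper proceeds by first writing $g_1(q)=\mathcal E_0(\tau)-2\mathcal E_0(2\tau)$ and then invoking an external result (Lemma~\ref{k1}, quoted from \cite{BOW}) asserting that the completed Eichler integral $\mathbb E(\ell\tau)$ is already known to be a sesquiharmonic Maass form on $\Gamma_0(\ell)$; after checking that $\mathbb E(\tau)-2\mathbb E(2\tau)=\widehat g_1(\tau)$ the sesquiharmonic property follows immediately, and the $\xi_0$-image is obtained by applying $\xi_0$ to $\mathbb E$ term by term. Your argument instead derives the closed form $\widehat g_1(\tau)=C+\tfrac12\log v+2\log|\eta(2\tau)|^2-\log|\eta(\tau)|^2$ (a formula the paper only records later, in the proof of Theorem~\ref{thm:g_1 limit}) and verifies everything by hand: modularity from the $\eta$-transformation law, the $\xi_0$-identity from $\partial_\tau\log\eta=\tfrac{\pi i}{12}E_2$, and annihilation by $\xi_0\xi_2\xi_0$ from the explicit constant $\xi_2(\widehat E_2(n\tau))=\tfrac{3}{\pi n}$. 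The trade-off is that the paper's proof is shorter but relies on a black box, whereas yours is fully self-contained and makes the mechanism transparent. One small omission: you do not explicitly check the growth condition at the cusps of $\Gamma_0(2)$, but this is immediate from your closed form since $\log|\eta|^2$ has at most linear growth in $v$ at every cusp.
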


Let $E(\tau;s)$ denote the classical Maass Eisenstein series defined in \eqref{Eis}.  We also realize $\widehat g_1$ as a limit of a Maass Eisenstein series.
\begin{thm}\label{thm:g_1 limit}
	We have
	\begin{equation*}
		\widehat g_1(\tau) = -\frac{\pi}{6} \lim_{s\to1} \left(2E(2\tau;s)-E(\tau;s)-\frac{3}{\pi(s-1)}\right).
	\end{equation*}
\end{thm}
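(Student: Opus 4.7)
The plan is to explicitly evaluate both sides of the claimed identity as elementary expressions in $\eta$-quotients and logarithms, and then match them. On the Eisenstein side, I would invoke the Kronecker limit formula. Starting from the standard Fourier expansion
$$E(\tau,s)=v^s+\phi(s)\,v^{1-s}+\frac{2\pi^s\sqrt{v}}{\Gamma(s)\zeta(2s)}\sum_{n\ne 0}|n|^{s-\frac12}\sigma_{1-2s}(|n|)\,K_{s-\frac12}(2\pi|n|v)\,e^{2\pi inu},$$
with $\phi(s)=\sqrt{\pi}\,\Gamma(s-\tfrac12)\zeta(2s-1)/(\Gamma(s)\zeta(2s))$, I would expand each factor to first order at $s=1$ using the Laurent expansions of $\Gamma$ and $\zeta$, the value $\psi(\tfrac12)=-\gamma-2\log 2$, and $K_{1/2}(x)=\sqrt{\pi/(2x)}\,e^{-x}$. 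The nonzero Fourier terms evaluated at $s=1$ collapse to $\tfrac{6}{\pi}\sum_{m\ge 1}\sigma_{-1}(m)(q^m+\bar q^m)$, and the Lambert series identity for $\log(q;q)_\infty$ rewrites this as $-\tfrac{12}{\pi}\log|\eta(\tau)|-v$. Collecting terms yields
$$E(\tau,s)=\frac{3/\pi}{s-1}+\frac{6\gamma}{\pi}-\frac{6\log 2}{\pi}-\frac{36\zeta'(2)}{\pi^3}-\frac{3}{\pi}\log v-\frac{12}{\pi}\log|\eta(\tau)|+O(s-1).$$

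Applying this at $\tau$ and at $2\tau$ and forming $2E(2\tau;s)-E(\tau;s)$, the principal parts combine to $\tfrac{3/\pi}{s-1}$, which the regularizing term removes. Passing to the limit and multiplying by $-\pi/6$ produces
$$2\log 2-\gamma+\frac{6\zeta'(2)}{\pi^2}+\frac{\log v}{2}+4\log|\eta(2\tau)|-2\log|\eta(\tau)|.$$

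On the $g_1$ side, the strategy is to find a closed form for $g_1(q)$. Using the splitting $\tfrac{q^n}{1+q^n}=\tfrac{q^n}{1-q^{2n}}-\tfrac{q^{2n}}{1-q^{2n}}$, the Lambert series identity $-\log(q;q)_\infty=\sum_{n\ge 1}\tfrac{1}{n}\tfrac{q^n}{1-q^n}$ and its $q\mapsto q^2$ analogue, together with the decomposition $\tfrac{q^n}{1-q^{2n}}=\tfrac12\bigl(\tfrac{q^n}{1-q^n}+\tfrac{q^n}{1+q^n}\bigr)$, a short algebraic manipulation yields the clean identity $g_1(q)=2\log(q^2;q^2)_\infty-\log(q;q)_\infty$. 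Substituting $\log|(q;q)_\infty|=\log|\eta(\tau)|+\pi v/12$ and the analogous formula for $(q^2;q^2)_\infty$ gives $g_1(q)+g_1(\overline{q})=4\log|\eta(2\tau)|-2\log|\eta(\tau)|+\pi v/2$. Inserting this into the definition of $\widehat g_1(\tau)$ cancels the two $\pm\pi v/2$ contributions and delivers exactly the expression produced by the Kronecker computation.

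The main obstacle I anticipate is the careful bookkeeping in the Laurent expansion of $\phi(s)v^{1-s}$ at $s=1$. Several independent sources contribute to the constant term: the $\zeta'(2)/\pi^3$ piece from $1/\zeta(2s)$, two distinct $\gamma$ contributions (from $\zeta(2s-1)$ and from $1/\Gamma(s)$), and the $-2\log 2$ from $\psi(\tfrac12)$. Ensuring that these combine to reproduce the constant $\tfrac{6\zeta'(2)}{\pi^2}$ in the definition of $\widehat g_1$ is the most error-prone step, though it is entirely routine once written out.
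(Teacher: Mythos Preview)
Your proposal is correct and follows essentially the same route as the paper: both express $g_1(q)$ in closed form as $2\log(q^2;q^2)_\infty-\log(q;q)_\infty$ (the paper obtains the equivalent identity $g_1(q)=\Log(P(q)/P^2(q^2))$ from a cited generating-function lemma), rewrite this in terms of $\eta$, and then apply the Kronecker limit formula. The only difference is that the paper simply quotes the Kronecker limit formula from Siegel rather than re-deriving it from the Fourier expansion, so your version is more self-contained but the underlying argument is identical.
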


We next consider $g_k(q)$ for $k\ge2$. We define for $k\in\mathbb{N}$ with $k\ge2$
\begin{equation*}
	\widehat g_k(\tau) := \frac{(-1)^{k+1} (4\pi)^k(k-1)!B_{2k}}{2(2k)!}\left(E(\tau;k)-2^k E(2\tau;k)\right).
\end{equation*}

\begin{thm}\label{thm:gk}
	Let $k\ge2$ be an integer. Then $g_k(q)$ is the holomorphic part of{\hspace{.07cm}}\footnote{Throughout by {\it holomorphic part} we mean the contribution that is a $q$-series - see also the proof of Theorem 1.2.} $\widehat g_k(\tau)$. Moreover, $\widehat g_k$ is a Maass form on $\Gamma_0(2)$ with eigenvalue $k(1-k)$.
\end{thm}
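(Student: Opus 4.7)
The plan is to identify $g_k(q)$ with the holomorphic (i.e., $q$-series) part of the explicit Fourier expansion of $E(\tau;k) - 2^k E(2\tau;k)$ at the integer point $s=k\ge 2$, and then deduce the Maass form property from the corresponding classical facts for $E(\tau;s)$.

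First I expand $g_k$ as a double Lambert-type series. Writing $\Log(1+\zeta^{1/r}q^r)=\sum_{m\ge1}(-1)^{m+1}\zeta^{m/r}q^{rm}/m$ and applying $(\zeta\frac{d}{d\zeta})^k$, which multiplies the $(r,m)$-term by $(m/r)^k$, and then evaluating at $\zeta=1$, I obtain
\begin{equation*}
g_k(q)=\sum_{r,m\ge 1}\frac{(-1)^{m+1}m^{k-1}}{r^k}q^{rm}.
\end{equation*}
Using $(-1)^{m+1}=1-2\cdot\mathbbm{1}_{2\mid m}$ and substituting $m=2m'$ in the even piece yields $g_k(q)=h_k(\tau)-2^k h_k(2\tau)$, where
\begin{equation*}
h_k(\tau):=\sum_{r,m\ge 1}\frac{m^{k-1}}{r^k}q^{rm}=\sum_{n\ge 1}\frac{\sigma_{2k-1}(n)}{n^k}q^n.
\end{equation*}

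Next I read off the holomorphic part of $E(\tau;k)$ from the classical Fourier expansion
\begin{equation*}
E(\tau;s)=v^s+\Lambda(s)v^{1-s}+\frac{2\pi^s\sqrt v}{\Gamma(s)\zeta(2s)}\sum_{n\neq 0}|n|^{s-\frac12}\sigma_{1-2s}(|n|)K_{s-\frac12}(2\pi|n|v)e^{2\pi i n u}
\end{equation*}
($\Re(s)>1$, $\Lambda(s)=\xi(2s-1)/\xi(2s)$). At $s=k\ge 2$ the half-integer Bessel function collapses to
\begin{equation*}
\sqrt v\,K_{k-\frac12}(2\pi n v)=\frac{e^{-2\pi n v}}{2\sqrt n}\sum_{j=0}^{k-1}\frac{(k-1+j)!}{j!(k-1-j)!(4\pi n v)^j},
\end{equation*}
so each Fourier coefficient for $n>0$ is $q^n$ times a polynomial in $1/v$; only the $j=0$ term is a pure $q$-series, the $n<0$ terms are $\overline q$-contributions, and the constant-in-$u$ piece $v^k+\Lambda(k)v^{1-k}$ carries no $q$. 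Using $\sigma_{1-2k}(n)=n^{1-2k}\sigma_{2k-1}(n)$ and the Euler identity $\zeta(2k)=(-1)^{k+1}(2\pi)^{2k}B_{2k}/(2(2k)!)$, the holomorphic part of $E(\tau;k)$ reduces to $\frac{\pi^k}{(k-1)!\zeta(2k)}h_k(\tau)$, and a direct computation yields
\begin{equation*}
\frac{(-1)^{k+1}(4\pi)^k(k-1)!B_{2k}}{2(2k)!}\cdot\frac{\pi^k}{(k-1)!\zeta(2k)}=1.
\end{equation*}
Repeating the analysis for $E(2\tau;k)$ (substituting $\tau\mapsto 2\tau$) produces $h_k(2\tau)$ in place of $h_k(\tau)$, which combines with the $2^k$ in the definition of $\widehat g_k$ to give overall prefactor $2^k$. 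Therefore the holomorphic part of $\widehat g_k(\tau)$ is $h_k(\tau)-2^k h_k(2\tau)=g_k(q)$.

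For the Maass form claim I invoke two standard facts. First, $E(\tau;s)$ is a weight-$0$ Maass form on $\SL_2(\IZ)$ for $\Re(s)>1$, eigen for the hyperbolic Laplacian $\Delta_0$ with eigenvalue $s(1-s)$; hence at $s=k\ge 2$, $E(\tau;k)$ is a weight-$0$ Maass form on $\SL_2(\IZ)$ with eigenvalue $k(1-k)$. Second, for any $N$, the map $\tau\mapsto N\tau$ sends weight-$0$ Maass forms on $\SL_2(\IZ)$ to weight-$0$ Maass forms on $\Gamma_0(N)$ with unchanged Laplace eigenvalue (this follows from the $\SL_2(\R)$-invariance of $\Delta_0$ together with the identity $N\gamma\tau=\gamma'(N\tau)$ for $\gamma=\left(\begin{smallmatrix}a&b\\c&d\end{smallmatrix}\right)\in\Gamma_0(N)$ and $\gamma'=\left(\begin{smallmatrix}a&Nb\\c/N&d\end{smallmatrix}\right)\in\SL_2(\IZ)$). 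Combining, $\widehat g_k$ is a weight-$0$ Maass form on $\Gamma_0(2)$ with eigenvalue $k(1-k)$.

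The only real obstacle is bookkeeping: one must carefully track the $\Gamma$-, $\zeta$-, and $\pi$-factors in the Bessel expansion and apply Euler's formula for $\zeta(2k)$, so that the prefactor in the definition of $\widehat g_k$ exactly cancels the constant produced by the Fourier coefficients of $E(\tau;k)$. No genuinely new idea beyond the standard Fourier expansion of the Maass Eisenstein series and the level-$N$ lifting is required.
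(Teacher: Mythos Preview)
Your proof is correct, and in fact more direct than the paper's. The paper proceeds by first rewriting $g_k(q)=(q\frac{d}{dq})^{k-1}(\mathcal E_{2-2k}(\tau)-2\mathcal E_{2-2k}(2\tau))$, then introducing the weight $2-2k$ harmonic Maass form $\mathbb E_{2-2k}$ completing $\mathcal E_{2-2k}$, applying the iterated raising operator $R_{2-2k}^{k-1}$ to $\mathbb E_{2-2k}(\tau)-2\mathbb E_{2-2k}(2\tau)$, and computing term by term that the result agrees (up to the stated constant) with $E(\tau;k)-2^kE(2\tau;k)$. You bypass the Eichler-integral and raising-operator machinery entirely: you expand $g_k$ as a Lambert series, split off the $2^k h_k(2\tau)$ piece combinatorially, and then match $h_k$ against the $j=0$ term of the explicit Bessel expansion of the Fourier coefficients of $E(\tau;k)$, with a single constant check via Euler's formula for $\zeta(2k)$. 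Your route is shorter and requires only the classical Fourier expansion of the Maass Eisenstein series; the paper's route, while longer, exhibits $\widehat g_k$ structurally as the image under iterated raising of a harmonic Maass form built from Eichler integrals, which is of independent interest and parallels the sesquiharmonic treatment of the $k=1$ case.
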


The paper is organized as follows. In Section 2, we recall basic facts on (weak) Maass forms, the raising operator, sesquiharmonic Maass forms, Maass Eisenstein series, and Eichler integrals. In Section 3, we prove Theorems \ref{thm:g1} and \ref{thm:g_1 limit}. The proof of Theorem  \ref{thm:gk} is given in Section 4.  In Section 5, we give an exemplary explicit calculation of $\widehat{g}_2 (\tau)$. We end in Section 6 with questions for further research.

\section*{Acknowledgments}
The authors thank Walter Bridges, Larry Rolen, and Jan-Willem van Ittersum for useful comments on an earlier version of this paper. In particular they thank Andreas Mono for pointing the authors to the Kronecker limit formula which lead to the proof of \Cref{thm:g_1 limit}.  The first author has received funding from the European Research Council (ERC) under the European Union's Horizon 2020 research and innovation programme (grant agreement No. 101001179). The third author was supported by the Basic Science Research Program through the National Research Foundation of Korea (NRF) funded by the Ministry of Education (RS-2023-00244423).

\section{Preliminaries}\label{S:Prelim}

\subsection{Weak Maass forms}

\begin{defn}
	A real-analytic function $f\colon\H\to\C$ is a {\it weak Maass form of weight $k\in\Z$ and eigenvalue $\lambda$ on} $\Gamma_0(N)$ if the following conditions hold: 
	\begin{enumerate}[wide,labelwidth=!,labelindent=0pt]
		\item We have, for $\begin{psmallmatrix}a&b\\c&d\end{psmallmatrix} \in \Gamma_0(N)$,
		\[
			f\left(\frac{a\tau+b}{c\tau+d}\right) = (c\tau+d)^k f(\tau).
		\]
		
		\item We have
		\[
			\Delta_k(f) = \lambda f,
		\]
		where the {\it weight $k$ Laplace operator} is defined as
		\[
			\Delta_k := -v^2\left(\frac{\partial^2}{\partial u^2} + \frac{\partial^2}{\partial v^2}\right) + ikv\left(\frac{\partial}{\partial u} + i\frac{\partial}{\partial v}\right).
		\]
		Note that
		\[
			\Delta_k = -\xi_{2-k}\circ\xi_k.
		\]
		
		\item The function $f$ has at most linear exponential growth at the cusps of $\Gamma_0(N)$.
	\end{enumerate}
	If $\lambda=0$, then $f$ is called a {\it harmonic Maass form}. 
\end{defn}

Harmonic Maass forms naturally split into a holomorphic and a non-holomorphic part. To be more precise, if $f$ is a harmonic Maass form of weight $k\in\frac12\N \setminus \{1\}$, then $f$ can be written as
\begin{equation*}
	f(\tau) = \sum_{n\gg-\infty} c_f^+(n) q^n + c_f^-(0) v^{1-k} + \sum_{\substack{n\ll \infty\\n \neq 0}} c_f^-(n) \Gamma(1-k,-4\pi nv) q^n,
\end{equation*}
where $\Gamma(s,y):=\int_y^\infty t^{s-1} e^{-t} dt$ is the {\it incomplete gamma function}. The first sum is the {\it holomorphic part} of $f$ and the remaining parts are its {\it non-holomorphic part}.

\subsection{The raising operator}
Define the {\it raising operator}
\begin{equation*}
	R_\kappa := 2i\frac\partial{\partial\tau} + \frac\kappa v.
\end{equation*}
This operator maps harmonic Maass forms of weight $\kappa$ to Maass forms of weight $\kappa+2$ and eigenvalue $\kappa$ (see \cite[Lemma 5.2]{BFOR}). Moreover, we require the iterated raising operator. To be more precise, for $n\in\N$, we define
\begin{equation*}
	R_\k^n := R_{\k+2(n-1)}\circ...\circ R_{\k}.
\end{equation*}
According to \cite[Lemma 5.4]{BFOR}, 
\begin{equation}\label{E:RD}
	R_{2-2k}^n = \sum_{r=0}^n (-1)^r \binom nr (2-2k+r)_{n-r} v^{r-n} (4\pi)^r D^r,
\end{equation}
where the {\it rising factorial} is defined as $(a)_n\coloneqq a(a+1)\cdots(a+n-1)$ and $D:=\frac1{2\pi i} \frac{\partial}{\partial \tau}$.

\subsection{Sesquiharmonic Maass forms}
\begin{defn}
	A real-analytic function $f:\H\to\C$ is called a {\it sesquiharmonic Maass form of weight $k$ on $\Gamma_0(N)$} if it satisfies (1) and (3) of Definition 2.1 and instead of (2) we have
	\[
		\Delta_{k,2}(f) = 0,
	\]
	where
	\[
		\Delta_{k,2} := -\xi_k\circ\xi_{2-k}\circ\xi_k.
	\]
\end{defn}

\subsection{Maass Eisenstein series}
For $s\in\C$ with $\re(s)>1$, we define the classical {\it Maass Eisenstein series}
\begin{equation}
	E(\tau; s) := \sum_{\gamma\in\Gamma_\infty\setminus \SL_2(\Z)} \im(\gamma\tau)^s, \label{Eis}
\end{equation}
where $\Gamma_\infty :=\{\pm\begin{psmallmatrix}1&n\\0&1\end{psmallmatrix} : n\in\Z\}$. The Maass Eisenstein series are Maass forms of weight $0$ on $\SL_2(\Z)$ with eigenvalue $s(1-s)$ under $\Delta_0$. In the following lemma, we state the Fourier expansion of $E(\tau; s)$.

\begin{lem}[{\cite[Theorem 3.9 iii)]{BFOR}}]\label{L:BFOR}
	Let $s\in\C$ with $\re(s)>1$. The Fourier expansion of $E(\tau;s)$ is given by
	\begin{equation*}
		E(\tau; s) = v^s + \phi(s) v^{1-s} + 2\sqrt v \sum_{n\in\Z\setminus\{0\}} \phi(n,s) K_{s-\frac12} (2\pi |n| v) e^{2\pi inu},
	\end{equation*}
	where $K_\kappa(x)$ is the $K$-Bessel function of order $\kappa$, 
	\begin{equation*}
		\phi(s) := \sqrt\pi \frac{\Gamma\!\left(s-\frac12\right)\zeta(2s-1)}{\Gamma(s)\zeta(2s)} \quad\text{and}\quad \phi(n,s) := \frac{\pi^s}{\Gamma(s)\zeta(2s)} \sum_{ab=|n|} \pa ab ^{s-\frac12}.
	\end{equation*}
\end{lem}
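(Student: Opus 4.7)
The plan is to separate the theorem into two independent claims: (i) $g_k$ equals the holomorphic part of $\widehat g_k$, and (ii) $\widehat g_k$ is a weak Maass form on $\Gamma_0(2)$ with eigenvalue $k(1-k)$. Claim (ii) will follow quickly from the structure of the Maass Eisenstein series, while (i) is an explicit Fourier-coefficient computation based on \Cref{L:BFOR}.

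For (i), first I would write $g_k$ as a Lambert-like series. Expanding $\Log(1+\zeta^{1/r}q^r)=\sum_{m\ge1}(-1)^{m-1}m^{-1}\zeta^{m/r}q^{rm}$, applying $(\zeta d/d\zeta)^{k}$ (which multiplies the summand by $(m/r)^k$), setting $\zeta=1$, and reindexing by $n=rm$ yields
$$g_k(q)=\sum_{n\ge1}\frac{\widetilde\sigma_{2k-1}(n)}{n^k}\,q^n,\qquad\widetilde\sigma_{2k-1}(n):=\sum_{d\mid n}(-1)^{d-1}d^{2k-1}.$$
Splitting divisors by parity gives the clean identity $\widetilde\sigma_{2k-1}(n)=\sigma_{2k-1}(n)-2^{2k}\sigma_{2k-1}(n/2)$, where $\sigma_{2k-1}(n/2):=0$ for odd $n$.

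Next I would extract the $q$-series portion of $E(\tau;k)$ from \Cref{L:BFOR} at $s=k\ge 2$. Since $k-\tfrac12$ is a positive half-integer,
$$K_{k-\frac12}(x)=\sqrt{\frac{\pi}{2x}}\,e^{-x}\sum_{j=0}^{k-1}\frac{(k-1+j)!}{j!(k-1-j)!(2x)^j}.$$
Among the constituent pieces of $E(\tau;k)$, the terms $v^k$ and $\phi(k)v^{1-k}$ are pure powers of $v$, the $n<0$ Fourier terms produce $\overline q$-series, and the $j\ge 1$ Bessel corrections carry explicit negative powers of $v$; only the $j=0$ contributions for $n>0$ are genuine $q$-series. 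Using $\phi(n,k)=\pi^k\sigma_{2k-1}(n)/((k-1)!\,\zeta(2k)\,n^{k-1/2})$ and $2\sqrt v\cdot(4\pi nv)^{-1/2}e^{-2\pi nv+2\pi inu}=q^n/\sqrt n$, the holomorphic part of $E(\tau;k)$ equals $\frac{\pi^k}{(k-1)!\,\zeta(2k)}\sum_{n\ge 1}\sigma_{2k-1}(n)n^{-k}q^n$. An identical computation (with $v\mapsto 2v$, $q\mapsto q^2$) produces the same prefactor times $\sum_{n\ge 1}\sigma_{2k-1}(n)n^{-k}q^{2n}$ for $E(2\tau;k)$.

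The arithmetic input $\zeta(2k)=(-1)^{k+1}(2\pi)^{2k}B_{2k}/(2(2k)!)$ shows that the constant $(-1)^{k+1}(4\pi)^k(k-1)!B_{2k}/(2(2k)!)$ in the definition of $\widehat g_k$ is precisely the reciprocal of $\pi^k/((k-1)!\zeta(2k))$. Consequently the holomorphic part of $\widehat g_k$ equals
$$\sum_{n\ge 1}\frac{\sigma_{2k-1}(n)}{n^k}q^n-2^k\!\sum_{n\ge 1}\frac{\sigma_{2k-1}(n)}{n^k}q^{2n}=\sum_{n\ge 1}\frac{\sigma_{2k-1}(n)-2^{2k}\sigma_{2k-1}(n/2)}{n^k}q^n=g_k(q),$$
after reindexing $m=2n$ in the second sum and invoking the parity identity. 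For (ii), $E(\tau;k)$ is a Maass form on $\SL_2(\Z)\supset\Gamma_0(2)$ with eigenvalue $k(1-k)$; for any $\gamma=\bigl(\begin{smallmatrix}a&b\\c&d\end{smallmatrix}\bigr)\in\Gamma_0(2)$ one has $2\gamma\tau=\widetilde\gamma(2\tau)$ with $\widetilde\gamma=\bigl(\begin{smallmatrix}a&2b\\c/2&d\end{smallmatrix}\bigr)\in\SL_2(\Z)$ (integral since $2\mid c$), so $E(2\tau;k)$ is $\Gamma_0(2)$-invariant; the chain rule applied to $\Delta_0=-v^2(\partial_u^2+\partial_v^2)$ confirms $\Delta_0 E(2\tau;k)=k(1-k)E(2\tau;k)$, and polynomial growth at the cusps is visible directly from the Fourier expansions. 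The main (but entirely mechanical) obstacle is the careful bookkeeping of the constants so that the normalization of $\widehat g_k$ cancels the factors $\pi^k/((k-1)!\zeta(2k))$ exactly; this coincidence is what dictates the precise prefactor in the definition of $\widehat g_k$.
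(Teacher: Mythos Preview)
Your write-up does not address the stated lemma. \Cref{L:BFOR} is the Fourier expansion of $E(\tau;s)$, quoted from \cite{BFOR} without proof in the paper, so there is nothing to compare against. What you have actually written is a proof of \Cref{thm:gk}, using \Cref{L:BFOR} as an input.

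Assuming \Cref{thm:gk} was the intended target, your argument is correct and takes a genuinely shorter route than the paper's. The paper passes through the harmonic Maass form $\mathbb E_{2-2k}$ and the iterated raising operator: it rewrites $g_k(q)=(q\,d/dq)^{k-1}(\mathcal E_{2-2k}(\tau)-2\mathcal E_{2-2k}(2\tau))$ (\Cref{gk}), computes $R_{2-2k}^{k-1}\mathbb E_{2-2k}$ term by term (\Cref{L:RE}), identifies the result as a constant multiple of $E(\tau;k)$ (\Cref{REE}), and only then reads off the holomorphic part (\Cref{hol}). You bypass the Eichler integral and the raising machinery entirely, expanding $g_k$ directly as $\sum_n \widetilde\sigma_{2k-1}(n)n^{-k}q^n$ and matching it against the $j=0$ Bessel contribution of $E(\tau;k)-2^kE(2\tau;k)$ via the parity identity $\widetilde\sigma_{2k-1}(n)=\sigma_{2k-1}(n)-2^{2k}\sigma_{2k-1}(n/2)$. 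The paper's detour yields the relation $R_{2-2k}^{k-1}\mathbb E_{2-2k}\propto E(\tau;k)$ as a byproduct, which has independent interest; your path is cleaner for the theorem as stated.
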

The Kronecker limit formula (see Theorem 1 of \cite{Si}) yields that
\begin{equation}\label{Kr} 
		\lim_{s\to1} \left(E(\tau;s)-\frac3{\pi(s-1)}\right) = \frac6\pi \left(\gamma-\log(2)-\log\!\left(\sqrt{v}|\eta(\tau)|^2\right)\right) - \frac{36}{\pi^3}\zeta'(2),
\end{equation}
where the {\it Dedekind $\eta$-function} is defined as
\begin{equation*}
	\eta(\tau) := q^{\frac{1}{24}} \prod_{n\ge1} \left(1-q^n\right) = \frac{q^\frac{1}{24}}{P(q)}.
\end{equation*}

The $K$-Bessel function has the following representation for $n\in\N_0$ \cite[Subsection 3.71, equation (12)]{W}
\begin{equation}\label{eqn:KBessel_expn}
	K_{n+\frac12} (x) = \sqrt{\frac{\pi}{2x}} e^{-x} \sum_{r=0}^{n} \frac{(n+r)!}{r!(n-r)! (2x)^r}.
\end{equation}

\subsection{Eichler integrals}

For $k\ge1$, we define the (re-normalized) {\it Eichler integral of $E_{2k}$} as
\begin{equation*}
	\mathcal E_{2-2k}(\tau) := \sum_{n\ge1} \frac{q^n}{n^{2k-1}\left(1-q^n\right)} = \sum_{n\ge1} \sigma_{1-2k}(n) q^n.
\end{equation*}
For $k\ge2$, these can be completed to harmonic Maass forms. To state the result, let
\begin{align}\label{E:E*}
	&\mathbb E_{2-2k}^*(\tau) := v^{2k-1} \\
	&+ \frac{2(2k)!}{B_{2k}(4\pi)^{2k-1}}\! \left(\zeta(2k-1)+\mathcal E_{2-2k}(\tau)+\sum_{n\ge1}\sigma_{1-2k}(n)\Gamma^*(2k-1,4\pi nv)q^{-n}\right)\!,\nonumber
\end{align}
where $\Gamma^*(s,y):=\frac{\Gamma(s,y)}{\Gamma(s)}$. By \cite[Theorem 1.2 (2)]{BOW}, we have the following lemma. Note that 
\begin{equation*}
	D^{2k-1}(\mathcal{E}_{2-2k}(\tau))= \frac{B_{2k}}{4k}(1-E_{2k} (\tau)).
\end{equation*}
\begin{lem}\label{k2}
	We have that, for $\ell\in\N$,
	\begin{equation*}
		\mathbb E_{2-2k}(\ell\tau) := \frac{B_{2k}(4\pi)^{2k-1}}{2(2k)!} \mathbb E_{2-2k}^*(\ell\tau)
	\end{equation*}
	is a harmonic Maass form of weight $2-2k$ on $\Gamma_0(\ell)$.
\end{lem}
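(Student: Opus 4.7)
The plan is to first establish the case $\ell=1$ via a classical Eichler-integral representation, and then pass to general $\ell$ by the substitution $\tau\mapsto\ell\tau$. The scalar prefactor $\tfrac{B_{2k}(4\pi)^{2k-1}}{2(2k)!}$ plays no role in the modularity/harmonicity analysis, so I will work with $\mathbb E_{2-2k}^*$ throughout.

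For $\ell=1$, I would recognize $\mathbb E_{2-2k}^*(\tau)$ as a constant multiple of the non-holomorphic Eichler integral
\[
\mathcal I(\tau) := \int_{-\overline\tau}^{i\infty} E_{2k}(z)\,(z+\tau)^{2k-2}\, dz,
\]
shifted by the additive constant $\tfrac{2(2k)!}{B_{2k}(4\pi)^{2k-1}}\zeta(2k-1)$. To confirm the match, I would expand $E_{2k}(z) = 1 - \tfrac{4k}{B_{2k}}\sum_{n\ge 1}\sigma_{2k-1}(n)\, e^{2\pi i n z}$ and integrate term by term: the $n=0$ piece contributes the $v^{2k-1}$ term (together with the regularizing $\zeta(2k-1)$), while after the substitution $z = -\overline\tau + is$ and an application of the integral identity $\Gamma(s,y) = y^s\!\int_1^\infty e^{-yt}t^{s-1}dt$, each $n\ge 1$ term produces both the holomorphic Eichler contribution $\mathcal E_{2-2k}$ and the non-holomorphic pieces $\sigma_{1-2k}(n)\,\Gamma^*(2k-1, 4\pi nv)\, q^{-n}$, reproducing \eqref{E:E*} exactly.

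Harmonicity is then immediate from a short $\xi$-calculation: using $\xi_{2-2k}(v^{2k-1}) = 2k-1$ and
\[
\xi_{2-2k}\!\left(\Gamma^*(2k-1, 4\pi nv)\, q^{-n}\right) \;=\; -\frac{(4\pi n)^{2k-1}}{\Gamma(2k-1)}\, q^n,
\]
together with the identity $\sigma_{1-2k}(n)\, n^{2k-1} = \sigma_{2k-1}(n)$, the non-holomorphic contributions collapse to give $\xi_{2-2k}(\mathbb E_{2-2k}^*) = (2k-1)\, E_{2k}(\tau)$. Since $E_{2k}$ is holomorphic for $k\ge 2$, one has $\xi_{2k}(E_{2k}) = 0$ and hence $\Delta_{2-2k}(\mathbb E_{2-2k}^*) = -\xi_{2k}\circ\xi_{2-2k}(\mathbb E_{2-2k}^*) = 0$. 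Polynomial growth at the cusps is visible from the Fourier expansion.

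The main obstacle is verifying weight $2-2k$ modularity under $\SL_2(\Z)$. For this I would exploit the Eichler integral $\mathcal I$: for $\gamma = \begin{psmallmatrix}a & b\\ c & d\end{psmallmatrix}\in\SL_2(\Z)$, the change of variables $z\mapsto\gamma^{-1}z$ in $\mathcal I(\gamma\tau)$, combined with the modularity of $E_{2k}$ (valid precisely because $k\ge 2$), expresses the cocycle $(c\tau+d)^{2k-2}\mathcal I(\gamma\tau) - \mathcal I(\tau)$ as a finite integral whose value is the period polynomial of $E_{2k}$---a polynomial in $\tau$ of degree at most $2k-2$. The only non-trivial period moment of $E_{2k}$ is (up to an explicit constant) $\zeta(2k-1)$, and the additive constant included in \eqref{E:E*} is tuned to cancel this cocycle, yielding genuine modular invariance of weight $2-2k$ on $\SL_2(\Z)$. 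The analogous argument would break at $k=1$, reflecting the quasimodular anomaly of $E_2$. Finally, for general $\ell$, the identity $\ell\gamma\tau = \gamma'(\ell\tau)$ with $\gamma' = \begin{psmallmatrix}a & \ell b\\ c/\ell & d\end{psmallmatrix}\in\SL_2(\Z)$ (valid for any $\gamma = \begin{psmallmatrix}a & b\\ c & d\end{psmallmatrix}\in\Gamma_0(\ell)$) transfers modularity to $\Gamma_0(\ell)$ with automorphy factor $(c\tau+d)^{2-2k}$, while harmonicity and growth are preserved because $\Delta_{2-2k}$ commutes with $\tau\mapsto\ell\tau$ (this substitution is conjugation by $\begin{psmallmatrix}\sqrt\ell & 0\\ 0 & 1/\sqrt\ell\end{psmallmatrix}\in\SL_2(\R)$, under which the Laplacian is invariant).
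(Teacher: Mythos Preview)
The paper does not prove this lemma at all: it is quoted directly from \cite[Theorem~1.2~(2)]{BOW}. So you are attempting considerably more than the paper does, and your harmonicity computation via $\xi_{2-2k}$ together with the reduction from $\SL_2(\Z)$ to $\Gamma_0(\ell)$ are both fine. The modularity argument, however, has two genuine gaps.

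First, the identification of $\mathbb E_{2-2k}^*$ with your integral $\mathcal I(\tau)=\int_{-\overline\tau}^{i\infty}E_{2k}(z)(z+\tau)^{2k-2}\,dz$ (plus a constant) is not correct. Carrying out the substitution $z=-\overline\tau+is$ on the $n$-th Fourier term of $E_{2k}$ produces, up to constants, exactly $\Gamma(2k-1,4\pi nv)\,q^{-n}$ and nothing else; no holomorphic $q^n$ term appears. Thus $\mathcal I(\tau)$ accounts only for the non-holomorphic incomplete-gamma sum in \eqref{E:E*}, while the holomorphic Eichler integral $\mathcal E_{2-2k}(\tau)=\sum\sigma_{1-2k}(n)q^n$ must be supplied separately (it is the \emph{holomorphic} Eichler integral $\int_\tau^{i\infty}E_{2k}(z)(z-\tau)^{2k-2}\,dz$, suitably regularized).

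Second, even granting the correct decomposition, the claim that ``the additive constant $\zeta(2k-1)$ is tuned to cancel this cocycle'' cannot work as stated. Under $\gamma=\begin{psmallmatrix}a&b\\c&d\end{psmallmatrix}$, adding a scalar $C$ shifts the weight-$(2-2k)$ cocycle by $C\bigl((c\tau+d)^{2k-2}-1\bigr)$, whereas the period polynomial of $E_{2k}$ for $S$ is (after regularization) a genuine degree-$(2k-2)$ polynomial involving products of Bernoulli numbers in addition to the single $\zeta(2k-1)$ contribution; a scalar cannot absorb it. What actually happens is that the cocycles of the holomorphic and non-holomorphic Eichler integrals, together with the cocycle of the $v^{2k-1}$ term (which is itself a polynomial in $\tau$ of degree $2k-2$), combine and cancel; the constant $\zeta(2k-1)$ fixes the remaining degree-zero discrepancy. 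Tracking this cancellation by hand is delicate. A cleaner route, and the one underlying \cite{BOW}, is to recognize $\mathbb E_{2-2k}^*$ as a nonzero constant multiple of the weight-$(2-2k)$ real-analytic Eisenstein series obtained from $\sum_{\gamma\in\Gamma_\infty\backslash\SL_2(\Z)}\Im(\gamma\tau)^s(c\tau+d)^{2k-2}$ at $s=0$ (or, equivalently, from $E(\tau;s)$ at $s=k$ via lowering), where modularity is manifest from the very definition and one only needs to match Fourier expansions.
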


We next turn to the case $k=1$. Let 
\[
	\mathbb E(\tau) :=  \gamma - \log(2) + \frac\pi6v - \frac{\log(v)}{2} - \frac{6\zeta'(2)}{\pi^2} + \mathcal{E}_0(\tau) + \sum_{n\ge1} \sigma_{-1}(n)\overline q^{n}.
\]
From \cite[Theorem 1.2 (1)]{BOW}, we have the following modularity.  
\begin{lem}\label{k1}
	For $\ell\in\N$, we have that $\mathbb E(\ell\tau)$ is a sesquiharmonic Maass form of weight $0$ on $\Gamma_0(\ell)$.
\end{lem}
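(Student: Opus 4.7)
The plan is to recognize $\mathbb E(\tau)$ as the Kronecker limit of the Maass Eisenstein series $E(\tau;s)$ at $s=1$, and then read off both the modular invariance and the sesquiharmonicity from standard properties of $E(\tau;s)$ and of the completed weight-two Eisenstein series $\widehat E_2$. The first ingredient is the identity
\[
\mathbb E(\tau) = \frac{\pi}{6}\lim_{s\to 1}\left(E(\tau;s) - \frac{3}{\pi(s-1)}\right),
\]
which I would prove by expanding $\log P(q) = \sum_{n,k\ge 1} q^{nk}/k = \mathcal E_0(\tau)$ (collecting terms by divisors), noting that $2\log|P(q)| = \mathcal E_0(\tau) + \sum_{n\ge 1}\sigma_{-1}(n)\overline q^n$, and substituting $|\eta(\tau)|^2 = e^{-\pi v/6}|P(q)|^{-2}$ into \eqref{Kr}; matching constants term-by-term yields the claim.

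Modularity then follows because $E(\tau;s)$ is $\SL_2(\mathbb Z)$-invariant for every $s$ in its domain and the subtracted pole $\frac{3}{\pi(s-1)}$ is independent of $\tau$, so the regularized limit $\mathbb E(\tau)$ is $\SL_2(\mathbb Z)$-invariant. To promote this to $\Gamma_0(\ell)$-invariance of $\mathbb E(\ell\tau)$, for $\gamma=\begin{psmallmatrix}a&b\\c&d\end{psmallmatrix}\in\Gamma_0(\ell)$ I would use the factorization $\ell\gamma\tau = \widetilde\gamma(\ell\tau)$ with $\widetilde\gamma := \begin{psmallmatrix}a&\ell b\\c/\ell&d\end{psmallmatrix}\in\SL_2(\mathbb Z)$ and conclude $\mathbb E(\ell\gamma\tau) = \mathbb E(\ell\tau)$. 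Moderate growth at the cusps of $\Gamma_0(\ell)$ follows from the explicit expansion at $i\infty$ combined with the $\SL_2(\mathbb Z)$-invariance on the whole upper half-plane.

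For sesquiharmonicity I would compute directly that
\[
\xi_0\bigl(\mathbb E(\tau)\bigr) = \frac{\pi}{6}\,\widehat E_2(\tau),
\]
using $\partial_{\overline\tau} v = -\tfrac{1}{2i}$ together with the elementary identity $n\sigma_{-1}(n) = \sigma_1(n)$. Since $\widehat E_2$ is the standard completion of the weight-two quasimodular Eisenstein series, $\xi_2(\widehat E_2)$ equals the constant $\tfrac{3}{\pi}$, so $\xi_2\xi_0(\mathbb E) = \tfrac{1}{2}$ and a final application of $\xi_0$ annihilates it; the substitution $\tau\mapsto\ell\tau$ produces compensating factors of $\ell$ and $\ell^{-1}$ in the two applications of $\xi$, so $\Delta_{0,2}(\mathbb E(\ell\tau)) = 0$. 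The main bookkeeping obstacle is pinning down the numerical constants $\gamma$, $\log 2$, and $\tfrac{6\zeta'(2)}{\pi^2}$ in the Kronecker identification so that they align precisely with the chosen normalization of $\mathbb E(\tau)$; once this identity is secured, both the modular transformation on $\Gamma_0(\ell)$ and the threefold $\xi$-chain are routine.
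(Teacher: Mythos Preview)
Your argument is correct. The paper, however, does not supply its own proof of this lemma: the sentence introducing it reads ``From \cite[Theorem 1.2 (1)]{BOW}, we have the following modularity,'' and the result is simply quoted from that reference.

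Your route---identifying $\mathbb E(\tau)$ with $\tfrac{\pi}{6}\lim_{s\to1}\bigl(E(\tau;s)-\tfrac{3}{\pi(s-1)}\bigr)$ via the Kronecker limit formula \eqref{Kr} and then reading off both the $\SL_2(\Z)$-invariance and the $\xi$-chain---is essentially the same computation the paper carries out later, but only for the combination $\mathbb E(\tau)-2\mathbb E(2\tau)$, in the proofs of Theorems~\ref{thm:g1} and~\ref{thm:g_1 limit}. By establishing the Kronecker identity for $\mathbb E(\tau)$ itself you make Lemma~\ref{k1} self-contained and eliminate the external citation; your computation $\xi_0(\mathbb E)=\tfrac{\pi}{6}\widehat E_2$ is exactly the one appearing in the proof of Theorem~\ref{thm:g1} (stated there for $\widehat g_1$), and the remaining steps $\xi_2(\widehat E_2)=\tfrac{3}{\pi}$ and $\xi_0(\tfrac12)=0$ are immediate. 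So what you have written is not a different method so much as a reorganization that pulls the Kronecker limit forward and absorbs the content of the cited result.
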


\section{Proofs of Theorems \ref{thm:g1} and \ref{thm:g_1 limit}}

In this section, we prove Theorems \ref{thm:g1} and \ref{thm:g_1 limit}. We start with \Cref{thm:g1}.

\begin{proof}[Proof of \Cref{thm:g1}]
	To find a completion for $g_1(q)$, we rewrite
	\begin{equation*}
		g_1(q) = \sum_{n\ge1} \frac{q^n}{n\left(1+q^n\right)} = \sum_{n\ge1} \frac{q^n}{n\left(1-q^n\right)} - 2\sum_{n\ge1} \frac{q^{2n}}{n\left(1-q^{2n}\right)} = \mathcal E_0(\tau) - 2\mathcal E_0(2\tau).
	\end{equation*}

	By \Cref{k1}, $\mathbb E(\tau)-2\mathbb E(2\tau)$ is a sesquiharmonic Maass form of weight $0$ on $\Gamma_0(2)$. A direct calculation shows that
	\begin{equation*}
		\mathbb E(\tau)-2\mathbb E(2\tau)=\widehat g_1(\tau).
	\end{equation*}

	We now apply $\xi_0$ to find
	\begin{equation*}
		\xi_0(\mathbb E(\tau)) = \frac\pi6 - \frac1{2v} - 4\pi\sum_{n\ge1} n\sigma_{-1}(n)q^n.
	\end{equation*}
	Using $n\sigma_{-1}(n)=\sigma_1(n)$, the claim on $\xi_0$ follows.
\end{proof}

Next, we relate $\widehat g_1$ to a Maass Eisenstein series.

\begin{proof}[Proof of \Cref{thm:g_1 limit}]
	In \cite[Lemma 3.1]{BKK}, the authors showed that
	\[
		\sum_{n \ge 1} s_1(n)q^n =  \frac{P(q)}{P\left(q^2\right)} \Log\left(\frac{P(q)}{P^2\left(q^2\right)}\right).
	\]
	By \eqref{g1}, we note that
	\[
		g_1(q) = \Log\left(\frac{P(q)}{P^2\left(q^2\right)}\right).
	\]
	Thus, we arrive at
	\begin{equation*}
		g_1(q) = 2\Log(\eta(2\tau))-\frac{\pi i\tau}4-\Log(\eta(\tau)).
	\end{equation*}
	Using \eqref{Kr} gives the claim.
\end{proof}

\section{Proof of \Cref{thm:gk}}

Throughout this section, we let $k \geq 2$. To prove \Cref{thm:gk}, we first write $g_k(q)$ in terms of Eichler integrals.
\begin{lem}\label{gk}
	We have
	\begin{equation*}
		g_k(q) = \left(q\frac{d}{d q}\right)^{k-1} \left(\mathcal E_{2-2k}(\tau) - 2\mathcal{E}_{2-2k}(2\tau)\right).
	\end{equation*}
\end{lem}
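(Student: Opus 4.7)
The plan is to prove the identity by directly expanding both sides as $q$-series and matching them coefficient by coefficient; no modular input is needed.

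First, I would rewrite $\Log(1+y) = \Log(1-y^2)-\Log(1-y)$ and expand each logarithm as a power series, obtaining
\[
	\sum_{r\ge1}\Log\!\left(1+\zeta^{1/r}q^r\right) = \sum_{r,m\ge1}\frac{\zeta^{m/r}}{m}\,q^{mr} \;-\; \sum_{r,m\ge1}\frac{\zeta^{2m/r}}{m}\,q^{2mr}.
\]
The operator $\left(\zeta\frac{d}{d\zeta}\right)^k$ replaces $\zeta^{m/r}$ by $(m/r)^k\zeta^{m/r}$ and $\zeta^{2m/r}$ by $(2m/r)^k\zeta^{2m/r}$. Evaluating at $\zeta=1$ and reindexing via $n=mr$, the inner sum becomes $n^{-k}\sum_{m\mid n}m^{2k-1}=n^{-k}\sigma_{2k-1}(n)$, so that
\[
	g_k(q) = \sum_{n\ge 1}\frac{\sigma_{2k-1}(n)}{n^k}\,q^n \;-\; 2^k\sum_{n\ge 1}\frac{\sigma_{2k-1}(n)}{n^k}\,q^{2n}.
\]

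Second, I would expand the right-hand side of the claim. Using the symmetry $\sigma_{1-2k}(n)=\sigma_{2k-1}(n)/n^{2k-1}$, one has $\mathcal{E}_{2-2k}(\tau)=\sum_{n\ge1}\frac{\sigma_{2k-1}(n)}{n^{2k-1}}q^n$. Since $\left(q\frac{d}{dq}\right)^{k-1}$ multiplies the coefficient of $q^n$ by $n^{k-1}$ (respectively $(2n)^{k-1}$ in $\mathcal{E}_{2-2k}(2\tau)$), applying it to $\mathcal{E}_{2-2k}(\tau)-2\mathcal{E}_{2-2k}(2\tau)$ reproduces exactly the series displayed above, and the identity follows.

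The argument is essentially a direct verification; the only mild technicality is justifying termwise differentiation and rearrangement of the double sum, which is immediate from absolute convergence of $\sum_{r,m\ge1}m^{k-1}|q|^{mr}/r^k$ for $|q|<1$. There is no substantial obstacle.
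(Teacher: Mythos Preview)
Your proof is correct and follows essentially the same route as the paper. Both hinge on the elementary decomposition $\Log(1+y)=\Log(1-y^2)-\Log(1-y)$ (the paper uses the equivalent derivative form $\tfrac{w}{1+w}=\tfrac{w}{1-w}-\tfrac{2w^2}{1-w^2}$); the only cosmetic difference is that you expand fully into divisor-sum coefficients $\sigma_{2k-1}(n)/n^{k}$ and match term by term, whereas the paper keeps the operator $\left(q\tfrac{d}{dq}\right)^{k-1}$ on the outside and works directly with the Lambert series $\tfrac{q^n}{1-q^n}$.
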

\begin{proof}
	Since
	\begin{equation*}
		w\frac d{dw} \Log(1+w) = \frac w{1-w} - \frac{2w^2}{1-w^2},
	\end{equation*}
	we have
	\begin{equation*}
		\left(w\frac d{dw}\right)^k \Log(1+w) = \left(w\frac d{dw}\right)^{k-1} \left(\frac w{1-w}-\frac{2w^2}{1-w^2}\right).
	\end{equation*}
	This implies that
	\begin{equation*}
		\left[\left(\zeta\frac d{d\zeta}\right)^k\Log\left(1+\zeta^\frac1nq^n\right)\right]_{\zeta=1} = \frac1{n^{2k-1}} \left(q\frac d{dq}\right)^{k-1} \left(\frac{q^n}{1-q^n}-\frac{2q^{2n}}{1-q^{2n}}\right).
	\end{equation*}
	Therefore, we conclude that
	\begin{align*}
		g_k(q) &= \left[\left(\zeta\frac d{d\zeta}\right)^{\!k}\sum_{n\ge1}\Log\!\left(1\!+\!\zeta^\frac1nq^n\right)\right]_{\zeta=1} \!
		= \left(q\frac d{dq}\right)^{\!k-1} \sum_{n \geq 1} \frac1{n^{2k-1}} \left( \frac{q^n}{1\!-\!q^n}-\frac{2q^{2n}}{1\!-\!q^{2n}}\right) \\
		&= \left(q\frac d{dq}\right)^{k-1} \left(\mathcal E_{2-2k}(\tau)-2\mathcal E_{2-2k}(2\tau)\right).\qedhere
	\end{align*}
\end{proof}

Now we define
\begin{equation*}
	g_k^*(\tau) := \left(-\frac1{4\pi}\right)^{k-1} R_{2-2k}^{k-1}\left(\mathbb E_{2-2k}(\tau)-2\mathbb E_{2-2k}(2\tau)\right).
\end{equation*}
Our goal is to show that $g_k^*(\tau)=\widehat g_k(\tau)$. To this end, we first compute the Fourier expansion of $R_{2-2k}^{k-1}(\mathbb E_{2-2k}(\tau))$.
\begin{lem}\label{L:RE}
	We have
	\begin{multline*}
		R_{2-2k}^{k-1} \left(\mathbb E_{2-2k}(\tau)\right) =  \frac{B_{2k}(4\pi)^{2k-1}}{2(2k)!}(k-1)! v^k + (-1)^{k+1} \zeta(2k-1) \frac{(2k-2)!}{(k-1)!} v^{1-k}\\
		+ (-1)^{k+1} \sqrt{\frac{v}{\pi}} \sum_{n\ge1} \sigma_{1-2k}(n) (4\pi n)^{k-\frac12} K_{k-\frac12} (2\pi nv) \left(e^{2\pi inu}+e^{-2\pi inu}\right).
	\end{multline*}
\end{lem}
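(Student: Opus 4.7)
The plan is to apply the iterated raising operator formula \eqref{E:RD} term by term to the Fourier expansion of $\mathbb E_{2-2k}(\tau)$ read off from \eqref{E:E*}, which splits into (a) the polynomial piece $\tfrac{B_{2k}(4\pi)^{2k-1}}{2(2k)!}v^{2k-1}$, (b) the constant $\zeta(2k-1)$, (c) the holomorphic Fourier coefficients $\sigma_{1-2k}(n)q^n$, and (d) the non-holomorphic pieces $\sigma_{1-2k}(n)\Gamma^*(2k-1,4\pi nv)q^{-n}$ for $n\ge 1$. In every case the basic inputs are $D(v)=-\tfrac1{4\pi}$ and $D(q^n)=nq^n$, while the output must be reorganized into the finite-sum expansion \eqref{eqn:KBessel_expn} of the half-integer $K$-Bessel function.

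\textbf{Easy terms.} For (a), I use $D^r(v^{2k-1})=(-1)^r\tfrac{(2k-1)!}{(2k-1-r)!(4\pi)^r}v^{2k-1-r}$, so every summand in \eqref{E:RD} becomes a multiple of $v^k$; the resulting finite sum collapses to $(k-1)!$. The cleanest justification is to observe $R_\kappa v^s=(s+\kappa)v^{s-1}$, so iterated raising through the weights $2-2k,\,4-2k,\ldots,-2$ gives the telescoping product $\prod_{j=0}^{k-2}\bigl((2k-1-j)+(2-2k+2j)\bigr)=\prod_{j=0}^{k-2}(j+1)=(k-1)!$, confirming the $v^k$ coefficient. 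For (b), only the $r=0$ summand of \eqref{E:RD} is nonzero and produces $(2-2k)_{k-1}\zeta(2k-1)v^{1-k}=(-1)^{k+1}\tfrac{(2k-2)!}{(k-1)!}\zeta(2k-1)v^{1-k}$, matching the $v^{1-k}$ term in the lemma. For (c), with $D^r(q^n)=n^rq^n$, \eqref{E:RD} yields $v^{1-k}q^n$ times a polynomial in $y:=4\pi nv$; reindexing $r\mapsto k-1-r$ and simplifying $(2-2k+r)_{k-1-r}=(-1)^{k-1-r}\tfrac{(2k-2-r)!}{(k-1)!}$ puts the result in the shape of \eqref{eqn:KBessel_expn} with $x=2\pi nv$, producing the $e^{2\pi inu}$ half of the claimed Fourier expansion.

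\textbf{The hard term and main obstacle.} The non-holomorphic input (d) is the technical heart. The key intermediate identity is
\[
	D\!\left(\Gamma^*(2k-1,4\pi nv)q^{-n}\right) = -n\,\Gamma^*(2k-2,4\pi nv)q^{-n},
\]
obtained from the chain rule (via $\partial_v\Gamma^*(2k-1,4\pi nv)=-\tfrac{4\pi n(4\pi nv)^{2k-2}e^{-4\pi nv}}{(2k-2)!}$) combined with the integration-by-parts recursion $\Gamma^*(2k-1,y)-\Gamma^*(2k-2,y)=\tfrac{y^{2k-2}e^{-y}}{(2k-2)!}$. Iterating gives $D^r=(-n)^r\Gamma^*(2k-1-r,4\pi nv)q^{-n}$. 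Substituting into \eqref{E:RD}, using the finite-series $\Gamma^*(m,y)=e^{-y}\sum_{j=0}^{m-1}y^j/j!$ for the positive integer $m=2k-1-r$, and the identity $q^{-n}e^{-4\pi nv}=\overline q^n=e^{-2\pi inu}e^{-2\pi nv}$, produces a double sum in $r,j$ that must be collapsed to $\sum_r\tfrac{(2k-2-r)!}{r!(k-1-r)!}y^r$ so as to match the $e^{-2\pi inu}$ half of \eqref{eqn:KBessel_expn}.

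\textbf{The main obstacle and a shortcut.} Verifying this collapse of the double sum is the main combinatorial obstacle; it can be checked inductively on $k$, but a cleaner route bypasses the identity entirely. Iterating the commutation $\Delta_{\kappa+2}R_\kappa=R_\kappa(\Delta_\kappa+\kappa)$ starting from the harmonic input $\mathbb E_{2-2k}$ shows that $R_{2-2k}^{k-1}\mathbb E_{2-2k}$ is a weight-$0$ Maass form with eigenvalue $\sum_{j=0}^{k-2}(2-2k+2j)=k(1-k)$, and its explicit $v^k$ leading term implies moderate growth at $\infty$. Standard Whittaker theory then forces each nonzero Fourier coefficient to be a multiple of $\sqrt v K_{k-1/2}(2\pi|n|v)$, with the multipliers pinned down by the $q^n$ leading asymptotic already computed in (c). This reduces (d) to a leading-order check, from which the full Fourier expansion stated in the lemma follows.
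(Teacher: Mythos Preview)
Your overall strategy matches the paper's, but you diverge at two points. For (a), your one-line telescoping via $R_\kappa(v^s)=(s+\kappa)v^{s-1}$ is slicker than the paper's route, which expands with \eqref{E:RD} and then evaluates the resulting alternating sum $\sum_{r=0}^{k-1}(-1)^r\binom{k-1}{r}\tfrac{1}{2k-r-1}$ through the beta integral $\beta(k,k)=\int_0^1 t^{k-1}(1-t)^{k-1}\,dt$. For (d), the paper never writes down your double sum: it uses the integral representation
\[
\Gamma^*(2k-1,4\pi nv)\,q^{-n}=\frac{\overline q^{\,n}}{(2k-2)!}\int_0^\infty (t+4\pi nv)^{2k-2}e^{-t}\,dt
\]
and applies $R_{2-2k}^{k-1}$ under the integral sign; the binomial theorem collapses the integrand to $(-1)^{k+1}\tfrac{(2k-2)!}{(k-1)!}v^{1-k}t^{k-1}(t+4\pi nv)^{k-1}$, whose $t$-integral is the finite sum \eqref{eqn:KBessel_expn} on the nose. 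So the paper trades your $\Gamma^*$-recursion for an integral trick that sidesteps the combinatorics entirely.

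Your Whittaker shortcut is conceptually sound, but the ``leading-order check'' for (d) is not as immediate as you imply: the top power of $v$ in your expansion of $R_{2-2k}^{k-1}(\Gamma^*(2k-1,4\pi nv)q^{-n})$ (and in fact every power above $v^0$) vanishes by $(1-1)^{k-1}=0$, so to read off the Bessel multiplier you must extract the coefficient of $\overline q^{\,n}$ itself, which still requires the Vandermonde-type identity $\sum_{r=0}^{k-1}(-1)^r\binom{k-1}{r}\binom{2k-2-r}{k-1}=1$. A cleaner way to close your argument is to drop the check altogether: once (a), (b), (c) agree with the Fourier expansion of the suitably scaled $E(\tau;k)$ from \Cref{L:BFOR}, the difference is an $\SL_2(\Z)$-invariant $\Delta_0$-eigenfunction with eigenvalue $k(1-k)<0$ that decays at the cusp, hence is square-integrable on $\SL_2(\Z)\backslash\H$ and must vanish by positivity of $\Delta_0$.
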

\begin{proof}
	By \eqref{E:E*} and Lemma \ref{k2}, we have
	\begin{multline*}
		R_{2-2k}^{k-1} \left(\mathbb E_{2-2k}(\tau)\right)  = \frac{B_{2k}(4\pi)^{2k-1}}{2(2k)!} R_{2-2k}^{k-1}\!\left(v^{2k-1}\right) + \zeta(2k-1) R_{2-2k}^{k-1}(1)\\
	 + R_{2-2k}^{k-1}\!\left(\mathcal E_{2-2k}(\tau)\right) + \sum_{n\ge1} \sigma_{1-2k}(n) R_{2-2k}^{k-1}\!\left(\Gamma^*\!\left(2k-1,4\pi nv\right)q^{-n}\right).
	\end{multline*}
	We now compute the individual summands.
	
	Using \eqref{E:RD}, we find that
	\begin{equation*}
		R_{2-2k}^{k-1}\left(v^{2k-1}\right) = \sum_{r=0}^{k-1} (-1)^r \binom{k-1}{r} (2-2k+r)_{k-1-r} v^{r+1-k} (4\pi)^r  D^r\left(v^{2k-1}\right).
	\end{equation*}
	Since
	\begin{align}
		(4\pi)^r D^r\left(v^{2k-1}\right) &= (-1)^r \frac{(2k-1)!}{(2k-r-1)!} v^{2k-1-r}\nonumber\\
		\intertext{and}
		(2-2k+r)_{k-1-r} &= (-1)^{k+r+1} \frac{(2k-r-2)!}{(k-1)!}, \label{E:FA}
	\end{align}
	we obtain that
	\begin{equation*}
		R_{2-2k}^{k-1}\left(v^{2k-1}\right) = (-1)^{k+1} (2k-1)! v^k \sum_{r=0}^{k-1} (-1)^r \frac{1}{r!(k-r-1)!} \frac{1}{2k-r-1}.
	\end{equation*}
	To evaluate the summation above, we let
	\begin{equation*}
		f(x) := \sum_{r=0}^{k-1} \binom{k-1}{r} \frac{(-1)^r x^{2k-r-1}}{2k-r-1}.
	\end{equation*}
	Observe that $f(0)=0$ and
	\begin{equation*}
		f'(x) = (-1)^{k+1} x^{k-1} (1-x)^{k-1}.
	\end{equation*}
	Thus, we have
	\begin{equation*}
		f(x) = (-1)^{k+1} \int_0^x t^{k-1} (1-t)^{k-1}dt.
	\end{equation*}
	In particular,
	\begin{equation*}
		f(1) = (-1)^{k+1} \int_0^1 t^{k-1} (1-t)^{k-1}dt = (-1)^{k+1} \beta(k,k),
	\end{equation*}
	where (for $a,b>0$)
	\begin{equation*}
		\beta(a,b) := \int_0^1 t^{a-1} (1-t)^{b-1} dt
	\end{equation*}
	is the {\it $\beta$-function}. Recall that
	\begin{equation*}
		\beta(a,b) = \frac{\Gamma(a)\Gamma(b)}{\Gamma(a+b)}.
	\end{equation*}
	This gives that
	\begin{equation*}
		f(1) = (-1)^{k+1} \frac{\Gamma(k)\Gamma(k)}{\Gamma(2k)} = \frac{(-1)^{k+1}(k-1)!^2}{(2k-1)!}.
	\end{equation*}
	Hence, we arrive at 
	\begin{equation*}
		R_{2-2k}^{k-1} \left(v^{2k-1}\right) = \frac{(-1)^{k+1}(2k-1)!}{(k-1)!} f(1)v^k = (k-1)!v^k,
	\end{equation*}
	which gives the first claimed summand.

	For the second summand, we find that, using \eqref{E:RD} and \eqref{E:FA},
	\begin{equation*}
		R_{2-2k}^{k-1}(1) = (-1)^{k+1} \frac{ (2k-2)!}{(k-1)!} v^{1-k}.
	\end{equation*}

	Next, since we may write
	\begin{equation*}
		\Gamma^*(2k-1,4\pi nv)q^{-n} = \frac{\overline q^{n}}{\Gamma(2k-1)} \int_0^\infty (t+4\pi nv)^{2k-2} e^{-t} dt,
	\end{equation*}
	we have
	\begin{equation*}
		R_{2-2k}^{k-1}\!\left(\Gamma^*(2k-1,4\pi nv)q^{-n}\right) = \frac{\overline q^{n}}{\Gamma(2k-1)} \int_0^\infty \! R_{2-2k}^{k-1}\!\left((t+4\pi nv)^{2k-2}\right) e^{-t} dt.
	\end{equation*}
	Now, using \eqref{E:RD}, we obtain
	\begin{multline*}
		R_{2-2k}^{k-1}\left((t+4\pi nv)^{2k-2}\right)\\
		= \sum_{r=0}^{k-1} (-1)^r \binom{k-1}{r} (2-2k+r)_{k-1-r} v^{r+1-k} (4\pi)^r D^r\left((t+4\pi nv)^{2k-2}\right).
	\end{multline*}
	By \eqref{E:FA} and the following evaluation
	\begin{equation*}
		D^r\left((t+4\pi nv)^{2k-2}\right) = (-n)^r \frac{(2k-2)!}{(2k-r-2)!} (t+4\pi nv)^{2k-2-r},
	\end{equation*}
	we derive that
	\begin{equation*}
		R_{2-2k}^{k-1} \left((t+4\pi nv)^{2k-2}\right) = (-1)^{k+1} \frac{(2k-2)!}{(k-1)!} v^{1-k}  t^{k-1} (t+4\pi nv)^{k-1}.
	\end{equation*}
	This implies that
	\begin{equation*}
		R_{2-2k}^{k-1}\left(\Gamma^*(2k-1,4\pi nv)q^{-n}\right) = \frac{(-1)^{k+1}v^{1-k} \overline q^{n}}{(k-1)!} \int_0^\infty t^{k-1} (t+4\pi nv)^{k-1} e^{-t} dt.
	\end{equation*}
	We next evaluate
	\begin{align*}
		\int_0^\infty t^{k-1} (t+4\pi nv)^{k-1} e^{-t} dt &= \sum_{r=0}^{k-1} \binom{k-1}{r} (4\pi n v)^{k-1-r} \int_0^\infty t^{k-1+r} e^{-t} dt\\
		&= \sum_{r=0}^{k-1} \binom{k-1}{r} (4\pi n v)^{k-1-r} \Gamma(k\!+\!r)\\
		&= (k\!-\!1)! (4\pi nv)^{k-1} \sum_{r=0}^{k-1} \frac{(k\!+\!r\!-\!1)!}{r!(k\!-\!r\!-\!1)!(4\pi nv)^r}.
	\end{align*}
	Using \eqref{eqn:KBessel_expn}, we arrive at
	\begin{equation*}
		R_{2-2k}^{k-1}\left(\Gamma^*(2k-1,4\pi nv)q^{-n}\right) = (-1)^{k+1}  \sqrt{\frac v{\pi}}  (4\pi n)^{k-\frac12}K_{k-\frac12}(2\pi nv) e^{-2\pi inu},
	\end{equation*}
	which gives the contribution of the second term in the summation over $n$.

	Lastly, we evaluate
	\begin{equation*}
		R_{2-2k}^{k-1}\left(\mathcal E_{2-2k}(\tau)\right) = \sum_{n\ge1} \sigma_{1-2k}(n) R_{2-2k}^{k-1}\left(q^n\right).
	\end{equation*}
	By \eqref{E:RD}, we find
	\begin{equation*}
		R_{2-2k}^{k-1}\left(q^n\right) = \sum_{r=0}^{k-1} (-1)^r \binom{k-1}r (2-2k+r)_{k-1-r} v^{r+1-k} (4\pi)^r  D^r\left(q^n\right).
	\end{equation*}
	Computing
	\begin{equation*}
		D^r\left(q^n\right) = n^r q^n,
	\end{equation*}
	and using \eqref{E:FA}, we obtain
	\begin{align*}
		R_{2-2k}^{k-1}\left(q^n\right) &= (-1)^{k+1} v^{1-k} q^n \sum_{r=0}^{k-1} \frac{(2k-r-2)!}{r!(k-1-r)!} (4\pi nv)^r\\
		&=(-1)^{k+1} (4\pi n)^{k-1} q^n \sum_{r=0}^{k-1} \frac{(k+r-1)!}{r!(k-1-r)! (4\pi n v)^r}
	\end{align*}
	by making the changes of variables $r\mapsto k-1-r$. Using \eqref{eqn:KBessel_expn}, we arrive at
	\begin{equation*}
		R_{2-2k}^{k-1}\left(q^n\right) =  (-1)^{k+1} \sqrt{\frac v {\pi}} (4\pi n)^{k-\frac12}  K_{k-\frac12} (2\pi nv) e^{2\pi inu}.
	\end{equation*}
	Hence, we conclude that
	\[
		R_{2-2k}^{k-1}\left(\mathcal E_{2-2k}(\tau)\right) =  (-1)^{k+1} \sqrt{\frac v {\pi}}  \sum_{n\ge1} \sigma_{1-2k}(n) (4\pi n)^{k-\frac12} K_{k-\frac12} (2\pi nv) e^{2\pi inu},
	\]
	which gives the remaining contribution.
\end{proof}

Our next lemma relates $R_{2-2k}^{k-1}(\mathbb E_{2-2k}(\tau))$ to $E(\tau;k)$. 
\begin{lem}\label{REE}
	We have
	\begin{equation*}
		R_{2-2k}^{k-1}(\mathbb E_{2-2k}(\tau)) = \frac{B_{2k}(4\pi)^{2k-1}(k-1)!}{2(2k)!} E(\tau;k).
	\end{equation*}
\end{lem}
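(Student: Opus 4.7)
The plan is to compare Fourier expansions term by term. Lemma \ref{L:RE} already supplies the complete Fourier expansion of $R_{2-2k}^{k-1}(\mathbb{E}_{2-2k}(\tau))$, so it remains to expand $E(\tau; k)$ via Lemma \ref{L:BFOR} at $s = k$ (which is in the region of absolute convergence since $k \ge 2$), multiply by $C := B_{2k}(4\pi)^{2k-1}(k-1)!/(2(2k)!)$, and check coefficient-wise agreement.

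First I would simplify $\phi(n,k)$ into a form directly comparable to Lemma \ref{L:RE}. Pairing divisors $d \leftrightarrow |n|/d$ in $\sum_{ab=|n|} (a/b)^{k-1/2}$ gives $|n|^{1/2-k}\sigma_{2k-1}(|n|) = |n|^{k-1/2}\sigma_{1-2k}(|n|)$, so that
\[
\phi(n,k) = \frac{\pi^k}{\Gamma(k)\zeta(2k)} |n|^{k-1/2}\sigma_{1-2k}(|n|).
\]
Matching the $v^k$ term is then immediate, since both sides have coefficient $C$. For the $v^{1-k}$ term and the nonzero Fourier modes, I would invoke two classical identities: Euler's evaluation $\zeta(2k) = (-1)^{k+1}(2\pi)^{2k} B_{2k}/(2(2k)!)$ (to eliminate $B_{2k}/\zeta(2k)$) and Legendre's duplication formula $\Gamma(k-1/2) = (2k-2)!\sqrt{\pi}/(4^{k-1}(k-1)!)$ (to handle the $\Gamma$-factors). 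A short algebraic computation then reduces $C\cdot\phi(k)$ to $(-1)^{k+1}(2k-2)!\zeta(2k-1)/(k-1)!$, exactly the $v^{1-k}$ coefficient in Lemma \ref{L:RE}; and it reduces $C \cdot 2\pi^k/(\Gamma(k)\zeta(2k))$ to $(-1)^{k+1} 2^{2k-1}\pi^{k-1}$, which once combined with the factor $|n|^{k-1/2}$ equals $(-1)^{k+1}\sqrt{v/\pi}(4\pi |n|)^{k-1/2}$, matching the Fourier modes in Lemma \ref{L:RE}.

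The main obstacle is purely bookkeeping of the many factors of $\pi$, powers of $2$, and factorials; no conceptual difficulty arises. As a structural sanity check, one may note that iterating the commutation relation $\Delta_{k+2} \circ R_k = R_k \circ (\Delta_k + k)$ shows that $R_{2-2k}^{k-1}(\mathbb{E}_{2-2k})$ is a Maass form of weight $0$ on $\SL_2(\Z)$ with eigenvalue $k(1-k)$, precisely matching the eigenvalue of $E(\tau;k)$; in principle one could combine this observation with uniqueness of the Eisenstein spectrum to reduce the proof to matching a single coefficient, but the direct Fourier comparison via Lemma \ref{L:RE} is the cleanest route.
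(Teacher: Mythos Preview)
Your proposal is correct and follows essentially the same route as the paper: both proofs compare the Fourier expansion of $R_{2-2k}^{k-1}(\mathbb E_{2-2k})$ from Lemma~\ref{L:RE} term by term against $C\cdot E(\tau;k)$ expanded via Lemma~\ref{L:BFOR}, using Euler's formula for $\zeta(2k)$, the Legendre duplication formula, and the divisor identity $\sum_{ab=|n|}(a/b)^{k-1/2}=|n|^{k-1/2}\sigma_{1-2k}(|n|)$. Your additional eigenvalue sanity check is a nice structural remark not present in the paper; one small wording slip is that when matching the nonzero modes you should also carry along the $\sqrt{v}$ prefactor from the Eisenstein expansion, but your numerics are correct once that is included.
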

\begin{proof}
	From \Cref{L:BFOR}, we obtain
	\begin{multline}\label{eqn:cMES}
		 \frac{B_{2k}(4\pi)^{2k-1}(k-1)!}{2(2k)!} E(\tau; k)
		 =  \frac{B_{2k}(4\pi)^{2k-1}(k-1)!}{2(2k)!}\\ \times\vast(v^k + \phi(k) v^{1-k} + 2\sqrt v \sum_{n\in\Z\setminus\{0\}} \phi(n,k) K_{k-\frac12} (2\pi |n| v) e^{2\pi inu}\vast).
	\end{multline}

	We now compare \eqref{eqn:cMES} and \Cref{L:RE} term-wise. One directly sees that the constants in front of $v^k$ are equal. Using
	\begin{equation*}
		B_{2k} = \frac{2(-1)^{k+1}(2k)!}{(2\pi)^{2k}} \zeta(2k) \qquad \text{for } k \geq 1
	\end{equation*}
	and
	\begin{equation*}
		\Gamma(k-1) \Gamma\left(k-\frac12\right) = 2^{3-2k} \sqrt\pi \Gamma(2k-2),
	\end{equation*}
	we also observe that the coefficients in front of $v^{1-k}$ coincide.

    Using
	\begin{equation}\label{eq:sigma}
		\sum_{ab=n} \pa ab ^{k-\frac12} = n^{k-\frac12} \sigma_{1-2k}(n)\qedhere
	\end{equation}
	the claim follows for the remaining term.
	\end{proof}

Next, we compute the holomorphic part of $(-\frac1{4\pi})^{k-1} R_{2-2k}^{k-1}(\mathbb E_{2-2k}(\tau))$.

\begin{lem}\label{hol}
	The holomorphic part of
	\begin{equation*}
		\left(-\frac1{4\pi}\right)^{k-1} R_{2-2k}^{k-1}(\mathbb E_{2-2k}(\tau))
	\end{equation*}
	is
	\begin{equation*}
		\left(q\frac d{dq}\right)^{k-1} \mathcal E_{2-2k}(\tau).
	\end{equation*}
\end{lem}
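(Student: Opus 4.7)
The plan is to apply the raising-operator formula \eqref{E:RD} piece-by-piece to the decomposition of $\mathbb{E}_{2-2k}(\tau)$ coming from \Cref{k2} and \eqref{E:E*}, and then identify which summands can possibly produce a pure $q$-series. Explicitly, the decomposition is
\begin{equation*}
\mathbb{E}_{2-2k}(\tau) = \frac{B_{2k}(4\pi)^{2k-1}}{2(2k)!}v^{2k-1} + \zeta(2k-1) + \mathcal{E}_{2-2k}(\tau) + \sum_{n\ge 1}\sigma_{1-2k}(n)\Gamma^*(2k-1,4\pi nv)q^{-n}.
\end{equation*}

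First I would dispose of the two "pure $v$" summands: the computations already carried out in the proof of \Cref{L:RE} show that $R_{2-2k}^{k-1}$ sends $v^{2k-1}$ and $1$ to explicit multiples of $v^k$ and $v^{1-k}$ respectively, neither of which is a $q$-series. Next I would dispose of the fourth summand: since $\Gamma(2k-1,4\pi nv)$ equals an explicit polynomial in $v$ times $e^{-4\pi nv}$, each term is a polynomial in $v$ times $\overline{q}^{\,n}$, and this $\overline{q}$-series structure is preserved under $R_{2-2k}^{k-1}$, which only multiplies by $v^{-1}$ and differentiates in $\tau$. Hence none of these three pieces contributes to the holomorphic part.

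Thus the entire holomorphic contribution comes from $R_{2-2k}^{k-1}(\mathcal{E}_{2-2k}(\tau))$. Expanding via \eqref{E:RD} yields
\begin{equation*}
R_{2-2k}^{k-1}\bigl(\mathcal{E}_{2-2k}(\tau)\bigr) = \sum_{r=0}^{k-1}(-1)^r\binom{k-1}{r}(2-2k+r)_{k-1-r}\,v^{r-(k-1)}(4\pi)^rD^r\bigl(\mathcal{E}_{2-2k}(\tau)\bigr).
\end{equation*}
Each $D^r(\mathcal{E}_{2-2k}(\tau))$ is already a $q$-series, so the prefactor $v^{r-(k-1)}$ is the sole obstruction to pure holomorphicity, and it vanishes precisely when $r=k-1$. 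For that top term the binomial equals $1$, the rising factorial $(1-k)_0$ equals $1$, and the power of $v$ is $v^0$, leaving exactly $(-1)^{k-1}(4\pi)^{k-1}D^{k-1}(\mathcal{E}_{2-2k}(\tau))$.

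To finish, I would multiply through by $(-1/(4\pi))^{k-1}=(-1)^{k-1}/(4\pi)^{k-1}$, which cancels the prefactors and identifies the holomorphic part of $(-1/(4\pi))^{k-1}R_{2-2k}^{k-1}(\mathbb{E}_{2-2k}(\tau))$ with $D^{k-1}(\mathcal{E}_{2-2k}(\tau))$; since $D$ acts as $q\,d/dq$ on any $q$-series, this is precisely $(q\,d/dq)^{k-1}\mathcal{E}_{2-2k}(\tau)$. There is no real obstacle beyond bookkeeping: the four summands of $\mathbb{E}_{2-2k}$ occupy disjoint Fourier sectors (polynomial in $v$, or $e^{\pm 2\pi inu}$ times a polynomial in $v^{-1}$), so no cross-cancellation between them is possible, and the prescription "take the contribution that is a $q$-series" isolates unambiguously the single $r=k-1$ term above.
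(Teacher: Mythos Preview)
Your argument is correct. The paper takes a slightly different route: having already packaged $R_{2-2k}^{k-1}(\mathbb{E}_{2-2k}(\tau))$ into the $K$-Bessel form in \Cref{L:RE}, it extracts the holomorphic part from that expression by expanding $\sqrt{v}\,K_{k-\frac12}(2\pi nv)$ via \eqref{eqn:KBessel_expn} as $e^{-2\pi nv}$ times a polynomial in $v^{-1}$, so that the $e^{2\pi inu}$-term contributes $q^n$ (from the constant-in-$v^{-1}$ piece) plus non-holomorphic corrections, while the $e^{-2\pi inu}$-term contributes only $\overline{q}^{\,n}$-type terms. You instead stay with the decomposition of $\mathbb{E}_{2-2k}$ itself and pick out the single $r=k-1$ summand of \eqref{E:RD} applied to $\mathcal{E}_{2-2k}$, observing that the incomplete-gamma piece lives entirely in negative Fourier modes. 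Your approach is a bit more elementary and self-contained for this lemma, since it never needs the Bessel identity; the paper's approach has the advantage that the Bessel form is already in hand (it is required for \Cref{REE}), so the holomorphic extraction becomes essentially a one-line observation.
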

\begin{proof}
	Clearly the terms with $v^k$ and $v^{1-k}$ do not contribute to the holomorphic part. To finish the claim, we need to study
	\begin{equation*}
		f_\pm(\tau) := \sqrt v K_{k-\frac12}(v) e^{\pm iu}.
	\end{equation*}
	From \eqref{eqn:KBessel_expn}, we obtain
	\[
		\sqrt v K_{k-\frac12}(v) e^v =  \sqrt{\frac{\pi}{2}} \sum_{r=0}^{k-1} \frac{(k-1+r)!}{r!(k-1-r)!(2 v)^r}
	\]
	and note that the constant term as a function of $\frac1v$ is $\sqrt\frac{\pi}2$.
	Since
	\begin{equation*}
			e^{-v+iu} = e^{i(u+iv)} = e^{i\tau} \quad\text{and}\quad
			e^{-v-iu} = e^{-i(u-iv)} = e^{-i\overline\tau},
	\end{equation*}
	the holomorphic part of $f_+(\tau)$ is $\sqrt{\frac\pi2} e^{i\tau}$ and $f_-(\tau)$ does not have a holomorphic part. Hence, the holomorphic part of $(-\frac1{4\pi})^{k-1} R_{2-2k}^{k-1}(\mathbb E_{2-2k}(\tau))$ is as claimed.
\end{proof}

We are now ready to prove \Cref{thm:gk}.
\begin{proof}[Proof of \Cref{thm:gk}]
	Since
	\begin{equation*}
		R_\kappa(f(r\tau)) = r\left[R_\kappa\left(f(\tau)\right)\right]_{\tau\mapsto r\tau},
	\end{equation*}
	we have
	\begin{equation*}
		R_{2-2k}^{k-1}\left(\mathbb E_{2-2k}(2\tau)\right) = 2^{k-1} \left[ R_{2-2k}^{k-1}\left(\mathbb E_{2-2k}(\tau)\right)\right]_{\tau\mapsto 2\tau}.
	\end{equation*}
	By Lemmas \ref{REE} and \ref{hol}, $(q\frac{d}{dq})^{k-1}\mathcal E_{2-2k}(\tau)$ is the holomorphic part of
	\begin{equation*}
		\left(-\frac1{4\pi}\right)^{k-1} R_{2-2k}^{k-1}(\mathbb E_{2-2k}(\tau)) = \frac{(-1)^{k+1} (4\pi)^k(k-1)!B_{2k}}{2(2k)!}E(\tau;k),
	\end{equation*}
	Therefore, by \Cref{gk}, we conclude that $g_k(q)$ is the holomorphic part of $\widehat g_k(\tau)$.

	We next determine the modularity properties of $\widehat g_k$. We have that $E(\tau;k)$ is a Maass form on $\SL_2(\Z)$, so $E(2\tau;k)$ is a Maass form on $\Gamma_0(2)$. The eigenvalue of $E(\tau;k)$ is $k(1-k)$ so the same holds for $\widehat g_k(\tau)$.
\end{proof}

\section{An example}
Here we explicitly work out the example of $g_2(q)$, writing it in terms of more elementary functions.

By Theorem \ref{thm:gk}, for $k=2$, we have that $g_2(q)$ is the holomorphic part of
\begin{equation*}
	\widehat g_2(\tau) =  \frac{\pi^2}{90} \left( E(\tau;2)-4 E(2\tau;2) \right).
\end{equation*}
We have, by \Cref{L:BFOR},
\begin{equation*}
	E(\tau;2) = v^2 + \frac{\phi(2)}v + 2\sqrt v\sum_{n\in\Z\setminus\{0\}} \phi(n,2) K_\frac32(2\pi|n|v) e^{2\pi inu}.
\end{equation*}
We then compute
\[
	\phi(2) = \frac{45}{\pi^3}\zeta(3), \quad K_\frac32(x) = \sqrt{\frac\pi{2x}} \left(1+\frac1x\right) e^{-x}.
\]	
We also have that by \eqref{eq:sigma},
\[
	\phi(n,2) = \frac{90}{\pi^2} \sum_{ab=|n|} \left(\frac ab\right)^\frac32 = \frac{90}{\pi^2} |n|^\frac32 \sigma_{-3}(|n|).
\]
Thus, we obtain that
\begin{equation*}
	E(\tau;2) = v^2 + \frac{45\zeta(3)}{\pi^3v} + \frac{90}{\pi^2} \sum_{n\ge1} n\sigma_{-3} (n) \left(1+\frac{1}{2\pi nv}\right) \left(q^n + \overline q^n \right),
\end{equation*}
which gives
\begin{align*}
	\widehat g_2(\tau) &= -\frac{\pi^2}6 v^2 - \frac{\zeta(3)}{2\pi v} + \sum_{n\ge1} n \sigma_{-3}(n) \left(q^n-4q^{2n}+\overline q^n-4\overline q^{2n}\right)\\
	&\hspace{5cm}+\frac{1}{2\pi v} \sum_{n\ge1} \sigma_{-3}(n) \left(q^n - 2q^{2n} +\overline q^n - {2} \overline{q}^{2n}  \right).
\end{align*}
Now we observe that from \Cref{gk},
\begin{equation*}
	\sum_{n\ge1} n\sigma_{-3}(n)\left(q^n-4q^{2n}\right)  = g_2(q).
\end{equation*}
Therefore, we conclude that
\[
	\widehat g_2(\tau) = g_2(q) + g_2(\overline q) - \frac{\pi^2}6 v^2 + \frac{1}{2\pi v} \bigg( \mathcal G_2 (q)+ \mathcal G_2 (\overline{q}) - \zeta(3) \bigg),
\]
where
\[
	\mathcal G_2 (q):= \mathcal{E}_{-2} (\tau) - 2 \mathcal{E}_{-2} (2\tau).
\]

Note that $(-q;q)_\infty \mathcal G_2(\tau)$ can be interpreted as the generating function for $s_{3}^* (n) = \sum_{\lambda \in \mathcal{D}_n} {\rm srp}_3 (\lambda)$, where ${\rm srp}_k (\lambda) = \sum_{j=1}^{\ell (\lambda)} \frac{1}{\lambda_j^k}$.  We first observe that
\begin{equation*}
	\mathcal G_2(q) = \sum_{n\ge1} \frac{q^n}{n^3\left(1-q^n\right)} - 2\sum_{n\ge1} \frac{q^{2n}}{n^3\left(1-q^{2n}\right)} = \sum_{n\ge1} \frac{q^n}{n^3\left(1+q^n\right)}.
\end{equation*}
From the theory of partitions, it is not hard to see that
\[
	f_3(\zeta;q) := \sum_{\lambda \in \mathcal{D}_n} \zeta^{{\rm srp}_3 (\lambda)} q^{|\lambda|} =  \prod_{n\ge1} \left(1+\zeta^\frac1{n^3} q^n\right).
\]
Therefore, we find that
\begin{equation*}
	\sum_{n \geq 0} s_3^*(n) q^n = \left[  \dfrac{\partial}{\partial\zeta} f_3(\zeta;q) \right]_{\zeta=1} = (-q;q)_{\infty} \sum_{n \geq 1} \frac{q^n}{n^3(1+q^n)} = (-q;q)_\infty \mathcal G_2 (q).
\end{equation*}

\section{Questions for Future research}
\begin{enumerate}[leftmargin=*]
	\item It is natural to ask how ${\rm srp}(\lambda)$ is distributed along the set $\mathcal{D}_n$. Recently, Bridges \cite{WB} proved that for $x\in\R$
	\[
		\lim_{n \to \infty} P_n \left( 2S - \log \left(\sqrt{3n}\right) \le x \right) = P\left( H \le x \right),
	\]
	where $S(\lambda) := \sum_{j=1}^n \frac1{\lambda_j}$,  $\lambda \in \mathcal{D}_n$ is chosen uniform randomly, and $H$ is a random harmonic sum, i.e., $H := \sum_{k\ge1} \frac{\varepsilon_k}{k}$. Here, $\varepsilon_k$ are independent random variables with $P(\varepsilon_k \!=\! \pm 1) = \frac{1}{2}$. Note that from \cite[Theorem 1.1]{KK}, $\mathbb{E} (2S) = \log(\sqrt{3n})$.   The asymptotics of the moments of  ${\rm srp}(\lambda)$, i.e., $s_k (n)$,  give more detailed information on how ${\rm srp}(\lambda)$ is distributed. The modularity of $\widehat{g}_k$ established in this paper could be helpful to derive sharper asymptotics of $s_k (n)$.  
	
	\item It would be interesting to investigate analogues of ${\rm srp}(\lambda)$ which can be related with modular forms like Eisenstein series. As a possible candidate, one may consider a twisted sum of reciprocals of parts in the partition $\lambda$, 
	$${\rm srp}_{\chi_p} (\lambda) = \sum_{j=1}^{\ell(\lambda)} \frac{\left(\frac{\lambda_j}{p}\right)}{\lambda_j}$$ 
	for a partition $\lambda \in \mathcal{D}_n$, where $(\frac{\cdot}{p})$ is the Legendre symbol (mod $p$) for an odd prime $p$. Now we define the first moment of ${\rm srp}_{\chi_p} (\lambda)$ as
	\[
		s_{\chi_p} (n) := \sum_{\lambda \in \mathcal{D}_n} {\rm srp}_{\chi_p} (\lambda).
	\]
	Then, the generating function for $s_{\chi_p} (n)$ is
	\begin{equation*}
		T_{\chi_p} (q) := (-q;q)_{\infty}  \sum_{m \geq 1} \frac{\chi_p (m)q^m}{m \!\left(1\!+\!q^m\right)} 
		= -(-q;q)_{\infty}  \sum_{n \geq 1} \frac{1}{n} \sum_{d|n} (-1)^d \chi_p \!\left(\frac n d \right)\! d q^n. \hspace{-.6cm}
	\end{equation*}
	Note that the summation part is a formally anti-derivative (w.r.t. $q \frac{d}{dq}$) of 
	\[
		\sum_{n \geq 1} \sum_{d|n} (-1)^d \chi_p \left(\frac n d \right) d q^n.
	\]
	This hints possible modular properties of $T_{\chi_p} (q)$ and higher moments for ${\rm srp}_{\chi_p}(\lambda)$. 
	
	\item Eichler integrals occur in many places. For example, by the works of Han and Ji \cite{Ha, HJ}, they are related to $q$-brackets. So it would be interesting to investigate whether our functions are related to these. 
\end{enumerate}


\end{document}